\definecolor{labelkey}{rgb}{0,0.08,0.45}
\definecolor{refkey}{rgb}{0,0.6,0.0}
\definecolor{Brown}{rgb}{0.45,0.0,0.05}
\definecolor{lime}{rgb}{0.00,0.8,0.0}
\definecolor{lblue}{rgb}{0.5,0.5,0.99}
\newcommand{\reckti}{rectangular}
\newcommand{\nnn}{\ensuremath{{n\in{\mathbb N}}}}
\newcommand{\thalb}{\ensuremath{\tfrac{1}{2}}}
\newcommand{\menge}[2]{\big\{{#1}~\big |~{#2}\big\}}
\newcommand{\To}{\ensuremath{\rightrightarrows}}
\newcommand{\fenv}[1]%
{\ensuremath{\,\overrightarrow{\operatorname{env}}_{#1}}}
\newcommand{\benv}[1]%
{\ensuremath{\,\overleftarrow{\operatorname{env}}_{#1}}}
\newcommand{\emp}{\ensuremath{\varnothing}}
\newcommand{\infconv}{\ensuremath{\mbox{\small$\,\square\,$}}}
\newcommand{\scal}[2]{\left\langle{#1},{#2}  \right\rangle}
\newcommand{\RR}{\ensuremath{\mathbb R}}
\newcommand{\RX}{\ensuremath{\,\left]-\infty,+\infty\right]}}
\newcommand{\dom}{\ensuremath{\operatorname{dom}}}
\newcommand{\gr}{\ensuremath{\operatorname{gr}}}
\newcommand{\reli}{\ensuremath{\operatorname{ri}}}
\newcommand{\inte}{\ensuremath{\operatorname{int}}}
\newcommand{\ran}{\ensuremath{\operatorname{ran}}}
\newcommand{\conv}{\ensuremath{\operatorname{conv}}}
\newcommand{\epi}{\ensuremath{\operatorname{epi}}}
\newcommand{\aff}{\ensuremath{\operatorname{aff}}}
\newcommand{\cconv}{\ensuremath{\overline{\operatorname{conv}}\,}}
\newcommand{\Fix}{\ensuremath{\operatorname{Fix}}}
\newcommand{\Id}{\ensuremath{\operatorname{Id}}}
\newcommand{\pinf}{\ensuremath{+\infty}}
\newtheorem{theorem}{Theorem}[section]
\newtheorem{lemma}[theorem]{Lemma}
\newtheorem{corollary}[theorem]{Corollary}
\newtheorem{proposition}[theorem]{Proposition}
\newtheorem{definition}[theorem]{Definition}
\theoremstyle{plain}{\theorembodyfont{\rmfamily}
}
\theoremstyle{plain}{\theorembodyfont{\rmfamily}
}
\theoremstyle{plain}{\theorembodyfont{\rmfamily}
}
\theoremstyle{plain}{\theorembodyfont{\rmfamily}
\newtheorem{example}[theorem]{Example}}
\newtheorem{fact}[theorem]{Fact}
\theoremstyle{plain}{\theorembodyfont{\rmfamily}
\newtheorem{remark}[theorem]{Remark}}
\begin{document}

\title{\textrm{Near Equality, Near Convexity,\\ Sums of Maximally
Monotone Operators,\\ and
Averages of Firmly Nonexpansive Mappings}}

\author{
Heinz H.\ Bauschke\thanks{Mathematics, Irving K.\ Barber School,
University of British Columbia, Kelowna, British Columbia V1V 1V7, Canada. E-mail:
\texttt{heinz.bauschke@ubc.ca}.},
~Sarah M.\ Moffat\thanks{Mathematics, Irving K.\ Barber School, 
University of British Columbia, 
Kelowna, British Columbia V1V 1V7, Canada. E-mail:
\texttt{sarah.moffat@ubc.ca}.},
~and Xianfu\
Wang\thanks{Mathematics, Irving K.\ Barber School, 
University of British Columbia, 
Kelowna, British Columbia V1V 1V7, Canada.
E-mail:  \texttt{shawn.wang@ubc.ca}.}}

\date{April 29, 2011\\
\bigskip
\emph{\small Dedicated to Jonathan Borwein on the occasion of his 60th Birthday}}
\maketitle

\vskip 8mm

\begin{abstract} \noindent
We study nearly equal and nearly convex sets, 
ranges of maximally monotone operators,
and ranges and fixed points of convex combinations of firmly nonexpansive
mappings. The main result states that the range of an average of
firmly nonexpansive mappings is nearly equal to the average of
the ranges of the mappings. A striking application of this result
yields that 
the average of asymptotically regular
firmly nonexpansive mappings is also asymptotically regular.
Throughout, examples are provided to illustrate the theory.
We also obtain detailed information on the domain and range
of the resolvent average. 
\end{abstract}

{\small
\noindent
{\bfseries 2000 Mathematics Subject Classification:}
{Primary 47H05, 47H09, 47H10;
Secondary 52A20, 90C25.
}

\noindent {\bfseries Keywords:}
Asymptotic regularity,
convex set,
firmly nonexpansive mapping,
fixed point,
monotone operator,
nearly convex set,
projection,
proximal average,
proximal map,
rectangular multifunction,
resolvent,
resolvent average.
}

\section{Overview}

Throughout, we assume that
\begin{equation}
\text{$X$ is
a real Euclidean space with inner
product $\scal{\cdot}{\cdot}$ and induced norm $\|\cdot\|$,}
\end{equation}
that
$m$ is a strictly positive integer, 
and that $I=\{1,\ldots,m\}$.

Our aim is to study range properties of sums of maximally monotone
operators as well as range and fixed point properties of firmly nonexpansive
mappings. The required notions of near convexity and near equality are
introduced in Section~\ref{sec:near}. Section~\ref{sec:mon} is concerned
with maximally monotone operators, while firmly nonexpansive mappings are
studied in Section~\ref{sec:firm}.
The notation we employ is standard and as in e.g.,
\cite{BC}, \cite{BorVan}, \cite{Rock70}, and \cite{Zali02} to which
we refer the reader for background material and further information.

\section{Near Equality and Near Convexity}
\label{sec:near}

In this section, we introduce near equality for sets and show that 
this notion is useful in the study of
nearly convex sets. 
These results are the key to study ranges of sums of maximal
monotone operators in sequel.
Let $C$ be a subset of $X$.
We use $\conv C$ and $\aff C$ for
 the \emph{convex hull} and \emph{affine hull}, respectively;
the \emph{closure} of $C$ is denoted by $\overline{C}$ and 
$\reli C$ 
is the \emph{relative interior} of a $C$
(i.e., 
the interior with respect to the affine hull of $C$).
See \cite[Chapter~6]{Rock70} for more on this fundamental notion.
The next result follows directly from the definition.

\begin{lemma}
\label{l:reli:cute}
Let $A$ and $B$ be subsets of $X$ such that
$A\subseteq B$ and $\aff A = \aff B$.
Then $\reli A \subseteq \reli B$.
\end{lemma}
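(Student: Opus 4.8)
The plan is to argue directly from the definition of relative interior, exploiting the hypothesis $\aff A = \aff B$ to transport a relative neighbourhood witnessing membership in $\reli A$ into one witnessing membership in $\reli B$. Write $L := \aff A = \aff B$ for the common affine hull. Fix $x \in \reli A$. By definition of relative interior, there exists $\varepsilon > 0$ such that $(x + \varepsilon\ball) \cap L \subseteq A$, where $\ball$ denotes the closed unit ball of $X$.

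Now I would simply chain inclusions: since $A \subseteq B$ by hypothesis, we get $(x + \varepsilon\ball) \cap L \subseteq A \subseteq B$. But $L = \aff B$, so this reads $(x + \varepsilon\ball) \cap \aff B \subseteq B$, which is exactly the statement that $x \in \reli B$. Since $x \in \reli A$ was arbitrary, $\reli A \subseteq \reli B$, as claimed.

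There is essentially no obstacle here — the only thing to be slightly careful about is the precise form of the definition of relative interior being used (a relative neighbourhood inside $L$ versus a ball intersected with the affine hull), but all standard formulations are equivalent and the hypothesis $\aff A = \aff B$ is precisely what makes the neighbourhood witnessing $x \in \reli A$ also serve as the witness for $x \in \reli B$. One should note that the hypothesis $A \subseteq B$ alone is not enough (a line segment inside a square has empty relative interior relative to the square's affine hull in the wrong direction); it is the equality of affine hulls that does the work. This is why the lemma is phrased the way it is, and the one-line proof will suffice.
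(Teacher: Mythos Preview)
Your proof is correct and is exactly what the paper has in mind: the paper does not spell out a proof at all, merely remarking that the result ``follows directly from the definition,'' and your argument supplies precisely that verification. The closing commentary about why the hypothesis $\aff A=\aff B$ is needed is accurate (though not required for the proof itself).
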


\begin{fact}[Rockafellar] \label{f:ri}
Let $C$ and $D$ be convex subsets of $X$,
and let $\lambda \in \RR$.
Then the following hold.
\begin{enumerate}
\item
\label{f:ri:i--}
$\reli C$ and $\overline{C}$ are convex.
\item
\label{f:ri:i-}
$C\neq\varnothing$ $\Rightarrow$ $\reli C \neq\varnothing$.
\item
\label{f:ri:i}
$\overline{\reli C} = \overline{C}$.
\item
\label{f:ri:iii}
$\reli C = \reli\overline{C}$.
\item
\label{f:ri:ii}
$\aff\reli C = \aff C = \aff\overline{C}$.
\item
\label{f:ri:vi}
$\reli C = \reli D$
$\Leftrightarrow$
$\overline{C}= \overline{D}$
$\Leftrightarrow$
$\reli C \subseteq D \subseteq \overline{C}$.
\item \label{f:ri:scal} $\reli \lambda C=\lambda \reli C$.
\item \label{f:ri:sum}
$\reli(C+D)=\reli C+\reli D.$
\end{enumerate}
\end{fact}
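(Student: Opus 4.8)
The plan is to reduce all eight items to two workhorse lemmas and then read the rest off formally. The first workhorse is the \emph{prolongation principle}: if $C$ is convex, $x\in\reli C$, and $y\in\overline C$, then $(1-t)x+ty\in\reli C$ for every $t\in[0,1[$. I would prove it by a direct estimate: fix $\varepsilon>0$ with $\ball(x,\varepsilon)\cap\aff C\subseteq C$, and show that every point of $\aff C$ sufficiently close to $(1-t)x+ty$ can be written as a convex combination $(1-t)x'+ty'$ of a point $x'\in\ball(x,\varepsilon)\cap\aff C\subseteq C$ and a point $y'\in C$ close to $y$, hence lies in $C$ by convexity. En route one uses that affine subspaces are closed, so $\aff\overline C=\aff C$, and that a point of $\reli C$ carries a whole relatively open ball of $\aff C$; this already yields (v), since from $\reli C\subseteq C\subseteq\overline C\subseteq\aff C$ we get $\aff\reli C\subseteq\aff C=\aff\overline C$, while the relatively open ball forces $\aff C\subseteq\aff\reli C$ (granting $\reli C\neq\varnothing$, which is (ii) below). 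The second workhorse is that $\reli$ commutes with linear maps: $\reli L(C)=L(\reli C)$ for linear $L\colon X\to X$ and convex $C$. The inclusion $L(\reli C)\subseteq\reli L(C)$ follows from the openness of the surjective affine map $L$ restricted to $\aff C$, onto $\aff L(C)=L(\aff C)$; for the reverse, given $u\in\reli L(C)$ and a fixed $x\in\reli C$, prolong the segment from $Lx$ through $u$ slightly past $u$ while staying in $L(C)$, reaching some $v=Lw$ with $w\in C$, so that $u=(1-t)Lx+tLw=L((1-t)x+tw)$ for some $0<t<1$, and then the prolongation principle applied in $C$ gives $(1-t)x+tw\in\reli C$, whence $u\in L(\reli C)$.

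Granting the two workhorses, items (i)--(vi) become bookkeeping. Item (i): $\overline C$ is convex by passing to limits, and $\reli C$ is convex by the prolongation principle applied to two of its own points. Item (iii): prolong from a fixed point of $\reli C$ to an arbitrary point of $\overline C$ and let $t\downarrow0$ to obtain $\overline C\subseteq\overline{\reli C}$; the reverse inclusion is immediate from $\reli C\subseteq C$. Item (v) is as above. Item (iv): by (v), $\aff C=\aff\overline C$, so Lemma~\ref{l:reli:cute} gives $\reli C\subseteq\reli\overline C$, and the reverse inclusion is once more the prolongation principle, since a point of $\reli\overline C$ is a proper convex combination of a point of $\reli C$ and a point of $\overline C$. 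Item (vi): in the first equivalence, one direction follows by taking closures and invoking (iii), the other by taking relative interiors and invoking (iv); in the chain $\overline C=\overline D\Leftrightarrow\reli C\subseteq D\subseteq\overline C$, the forward direction is (iii) together with (iv), and the backward direction follows by taking closures in $\reli C\subseteq D\subseteq\overline C$ and using $\overline{\reli C}=\overline C$ from (iii).

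The lone genuinely constructive point is (ii): reducing to $0\in C$, the set $\aff C$ is a linear subspace, say of dimension $k$; choose affinely independent $x_0,\dots,x_k\in C$ with $\aff\{x_0,\dots,x_k\}=\aff C$, and note that the barycenter $\tfrac{1}{k+1}\sum_{i=0}^{k}x_i$ lies in the relative interior of the simplex they span, which is contained in $C$ by convexity; hence $\reli C\neq\varnothing$. Finally, (vii) is the linear-map lemma applied to the affine homeomorphism $x\mapsto\lambda x$ when $\lambda\neq0$ (with the case $\lambda=0$ trivial once $\reli C\neq\varnothing$), and (viii) is the linear-map lemma applied to $S\colon X\times X\to X$, $S(x,y)=x+y$, together with the one-line fact $\reli(C\times D)=\reli C\times\reli D$: indeed $\reli(C+D)=\reli S(C\times D)=S(\reli(C\times D))=S(\reli C\times\reli D)=\reli C+\reli D$.

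I expect the main obstacle to be the second workhorse, the commutation of $\reli$ with linear maps, as it is what powers both (vii) and, crucially, the sum formula (viii); the prolongation principle is the technical engine behind it and behind nearly everything else, and (ii) is the only spot where an explicit geometric construction cannot be sidestepped. Once these three inputs are secured, the remaining items are purely formal manipulation of closures and relative interiors.
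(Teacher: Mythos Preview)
Your proposal is essentially correct and self-contained, but it is not how the paper handles this statement. The paper treats Fact~\ref{f:ri} purely as a citation: each item is referenced to the corresponding result in Rockafellar's \emph{Convex Analysis} (Theorems~6.2 and~6.3 and Corollaries~6.3.1, 6.6.1, 6.6.2), with no argument given. Your route---building everything on the line-segment (prolongation) principle and the commutation of $\reli$ with linear maps---is in fact precisely the scaffolding Rockafellar himself uses in that chapter, so you are reconstructing the cited source rather than paralleling the paper. This buys self-containment and insight at the cost of length; the paper's citation approach is appropriate because the result is labelled a ``Fact'' and is background, not a contribution.

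One small slip to fix: in your argument for item~(iii), to show $\overline C\subseteq\overline{\reli C}$ you fix $x\in\reli C$, $y\in\overline C$, and form $(1-t)x+ty\in\reli C$; you then want this to approach $y$, so you should let $t\uparrow 1$, not $t\downarrow 0$. Otherwise the sketch is sound, including the two workhorses and the reduction of~(viii) to the linear-map lemma via the addition map $S\colon X\times X\to X$.
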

\begin{proof}
\ref{f:ri:i--}\&\ref{f:ri:i-}: See \cite[Theorem~6.2]{Rock70}.
\ref{f:ri:i}\&\ref{f:ri:iii}:
See \cite[Theorem~6.3]{Rock70}.
\ref{f:ri:ii}: See \cite[Theorem~6.2]{Rock70}.
\ref{f:ri:vi}:
See \cite[Corollary~6.3.1]{Rock70}. 
\ref{f:ri:scal}:
See \cite[Corollary~6.6.1]{Rock70}.
\ref{f:ri:sum}:
See \cite[Corollary~6.6.2]{Rock70}.
\end{proof}

The key notion in this paper is defined next.

\begin{definition}[near equality]
\label{d:ne}
Let $A$ and $B$ be subsets of $X$.
We say that $A$ and $B$ are \emph{nearly
equal}, if
\begin{equation}
A\approx B \quad :\Leftrightarrow\quad
\text{$\overline{A} = \overline{B}$ ~~and~~ $\reli A = \reli B$.}
\end{equation}
\end{definition}

Observe that 
\begin{equation}
\label{e:mike}
A\approx B \;\Rightarrow\;
\inte A=\inte B
\end{equation}
since the relative interior coincides with the interior whenever
the interior is nonempty.

\begin{proposition}[equivalence relation]
The following hold for any subsets $A$, $B$, $C$ of $X$.
\begin{enumerate}
\item
$A \approx A$.
\item
$A \approx B$ $\Rightarrow$ $B \approx A$.
\item
$A \approx B$ and $B\approx C$
$\Rightarrow$ $A\approx C$.
\end{enumerate}
\end{proposition}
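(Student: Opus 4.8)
The plan is to observe that, by Definition~\ref{d:ne}, the relation $A \approx B$ is simply the conjunction of the two assertions $\overline{A}=\overline{B}$ and $\reli A = \reli B$, each of which is an \emph{honest} equality between subsets of $X$. Since set equality is reflexive, symmetric, and transitive, and since a finite conjunction of equivalence relations is again an equivalence relation, all three claims will follow at once. Concretely, I would simply verify the three properties in turn.

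For reflexivity, I would note that $\overline{A}=\overline{A}$ and $\reli A = \reli A$ hold trivially, so $A\approx A$. For symmetry, I would assume $A\approx B$, i.e., $\overline{A}=\overline{B}$ and $\reli A = \reli B$; reversing both equalities gives $\overline{B}=\overline{A}$ and $\reli B = \reli A$, that is, $B\approx A$. For transitivity, I would assume $A\approx B$ and $B\approx C$, so that $\overline{A}=\overline{B}$, $\overline{B}=\overline{C}$, $\reli A = \reli B$, and $\reli B = \reli C$; chaining the closure equalities yields $\overline{A}=\overline{C}$, and chaining the relative-interior equalities yields $\reli A = \reli C$, whence $A\approx C$.

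There is no real obstacle here: the content of the proposition lies entirely in recognizing that $\approx$ has been defined as a conjunction of genuine equalities, after which the verification is immediate and requires none of the machinery of Fact~\ref{f:ri}. (The situation would be different if near equality had instead been defined through a chain of inclusions such as $\reli A \subseteq B \subseteq \overline{A}$, where transitivity would genuinely need an argument; by Fact~\ref{f:ri}\ref{f:ri:vi} that formulation is in fact equivalent to the present one, but the clean symmetric definition in terms of closures and relative interiors makes the equivalence-relation statement transparent.)
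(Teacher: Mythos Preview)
Your proof is correct and matches the paper's treatment: the paper states this proposition without proof, evidently regarding it as an immediate consequence of the definition, which is exactly what you verify. One small caveat on your parenthetical remark: Fact~\ref{f:ri}\ref{f:ri:vi} is stated only for \emph{convex} sets, so the equivalence with the inclusion formulation $\reli A\subseteq B\subseteq\overline{A}$ is not available for arbitrary subsets---but this aside does not affect your argument.
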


\begin{proposition}[squeeze theorem]
\label{p:squeeze}
Let $A,B,C$ be subsets of $X$ such that
$A\approx C$ and $A\subseteq B\subseteq C$. Then
$A\approx B\approx C$.
\end{proposition}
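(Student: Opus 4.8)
The plan is to verify the two defining conditions of $A\approx B$ and of $B\approx C$ separately, namely equality of closures and equality of relative interiors, in each case squeezing the chain $A\subseteq B\subseteq C$ against the hypothesis $A\approx C$.

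First, the closures. Applying $\overline{\,\cdot\,}$ to $A\subseteq B\subseteq C$ gives $\overline{A}\subseteq\overline{B}\subseteq\overline{C}$, and since $\overline{A}=\overline{C}$ by hypothesis, all three closures coincide; in particular $\overline{A}=\overline{B}$ and $\overline{B}=\overline{C}$.

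The relative interiors need more care, because $\reli$ is \emph{not} monotone under inclusion for general (non-convex) sets: enlarging a set by a single point lying outside its affine hull can enlarge the affine hull and wipe out the relative interior, so Fact~\ref{f:ri} is of no direct help here (its relevant parts assume convexity). The remedy is to show first that $A$, $B$, $C$ all have the same affine hull and then invoke Lemma~\ref{l:reli:cute}. To equate the affine hulls I would use that $X$ is finite dimensional, so the affine subspace $\aff A$ is closed; since $A\subseteq\aff A$ this yields $\overline{A}\subseteq\aff A$, hence $\aff\overline{A}=\aff A$, and likewise $\aff\overline{C}=\aff C$. Combined with $\overline{A}=\overline{C}$ from the previous step, we get $\aff A=\aff C$, and then $\aff A\subseteq\aff B\subseteq\aff C=\aff A$ forces $\aff A=\aff B=\aff C$. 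Now Lemma~\ref{l:reli:cute} applied to $A\subseteq B$ (with $\aff A=\aff B$) and to $B\subseteq C$ (with $\aff B=\aff C$) gives $\reli A\subseteq\reli B\subseteq\reli C$; since $\reli A=\reli C$ by hypothesis, all three relative interiors coincide.

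Putting the pieces together, $\overline{A}=\overline{B}$ and $\reli A=\reli B$ give $A\approx B$, while $\overline{B}=\overline{C}$ and $\reli B=\reli C$ give $B\approx C$ (and $A\approx C$ holds by hypothesis, or by transitivity). The only step that is not purely formal is the affine-hull argument; once finite dimensionality is used to pin down $\aff A=\aff B=\aff C$, everything else is a routine sandwich.
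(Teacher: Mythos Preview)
Your proof is correct and follows essentially the same route as the paper's: first squeeze the closures, then deduce $\aff A=\aff B=\aff C$ via $\aff S=\aff\overline{S}$, then apply Lemma~\ref{l:reli:cute} to squeeze the relative interiors. The only difference is expository---you spell out why $\aff\overline{A}=\aff A$ (closedness of affine subspaces in finite dimensions) and why $\reli$ is not naively monotone, whereas the paper simply writes $\aff(A)=\aff(\overline{A})$ without comment.
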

\begin{proof}
By assumption, $\overline{A}=\overline{C}$ and $\reli A=\reli C$.
Thus $\overline{A}=\overline{B} = \overline{C}$ and also 
$\aff(A)=\aff(\overline{A})=\aff(\overline{C})=\aff(C)$.
Hence 
$\aff A=\aff B=\aff C$ and so,
by Lemma~\ref{l:reli:cute}, 
$\reli A\subseteq \reli B\subseteq\reli C$.
Since $\reli A=\reli C$, we deduce that 
$\reli A=\reli B=\reli C$. 
Therefore, $A\approx B\approx C$.
\end{proof}

The equivalence relation ``$\approx$" is well suited 
for studying nearly convex sets, the definition of which we recall
next.

\begin{definition}[near convexity]
{\rm (See Rockafellar and Wets's \cite[Theorem~12.41]{Rock98}.)}
Let $A$ be a subset of $X$.
Then $A$ is \emph{nearly convex}
if there exists a convex subset $C$ of $X$ such that
$C \subseteq A \subseteq \overline{C}$.
\end{definition}

\begin{lemma}
\label{l:nearconset}
Let $A$ be a nearly convex subset of $X$, say
$C\subseteq A\subseteq \overline{C}$, where 
$C$ is a convex subset of $X$. 
Then
\begin{equation}
A\approx\overline{A}\approx\reli A\approx\conv A 
\approx\reli\conv A \approx C.
\end{equation}
In particular, the following hold. 
\begin{enumerate}
\item\label{o:c} $\overline{A}$ and $\reli A$ are convex.
\item\label{t:c} If $A\neq\varnothing$, then $\reli A\neq\varnothing$.
\end{enumerate}
\end{lemma}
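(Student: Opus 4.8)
The plan is to establish the chain of near-equalities by repeated application of the squeeze theorem (Proposition~\ref{p:squeeze}) and Fact~\ref{f:ri}. First I would observe that, since $C$ is convex, Fact~\ref{f:ri}\ref{f:ri:i} gives that $\overline{C}$ is convex, and the hypothesis $C\subseteq A\subseteq\overline{C}$ together with Fact~\ref{f:ri}\ref{f:ri:i} (applied to $\overline{C}$, noting $\overline{\overline{C}}=\overline{C}$) shows $C\approx\overline{C}$: indeed $\overline{C}=\overline{\overline{C}}$ trivially, and $\reli C=\reli\overline{C}$ by Fact~\ref{f:ri}\ref{f:ri:iii}. Hence $C\approx\overline{C}$, and now the inclusions $C\subseteq A\subseteq\overline{C}$ let Proposition~\ref{p:squeeze} conclude $A\approx C$.

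Next I would upgrade this to the full chain. Since $A\approx C$ and taking closures or relative interiors is compatible with $\approx$ in the obvious way, I would argue: $C\subseteq\overline{A}\subseteq\overline{C}$ gives $\overline{A}\approx C$ by the squeeze theorem; similarly $\reli C\subseteq\reli A$ (here one needs $\aff C=\aff A$, which follows from $\overline{A}=\overline{C}$ via Fact~\ref{f:ri}\ref{f:ri:ii}, so Lemma~\ref{l:reli:cute} applies) and $\reli A\subseteq A\subseteq\overline{C}$, hence $\reli A\approx C$ by squeezing. For $\conv A$: since $C$ is convex and $C\subseteq A$, we have $C\subseteq\conv A\subseteq\conv\overline{C}=\overline{C}$, so $\conv A\approx C$ by squeezing again. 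Finally $\reli\conv A$ is handled either by the same squeeze argument ($\reli C\subseteq\reli\conv A\subseteq\conv A\subseteq\overline{C}$, using $\aff\conv A=\aff C$) or by noting $\reli\conv A=\reli\,\overline{\conv A}$ and chasing closures. Transitivity of $\approx$ then assembles $A\approx\overline{A}\approx\reli A\approx\conv A\approx\reli\conv A\approx C$.

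For the two itemized consequences: \ref{o:c} is immediate because $\overline{A}=\overline{C}$ and $\reli A=\reli C$ with $C$ convex, so both are convex by Fact~\ref{f:ri}\ref{f:ri:i--}. For \ref{t:c}, if $A\neq\varnothing$ then $C$ could still a priori be empty, but $C\subseteq A\subseteq\overline{C}$ forces $\overline{C}\neq\varnothing$, hence $C\neq\varnothing$ (the closure of the empty set is empty), so $\reli C\neq\varnothing$ by Fact~\ref{f:ri}\ref{f:ri:i-}, and $\reli A=\reli C\neq\varnothing$.

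The only step requiring genuine care is the justification of the inclusions $\reli C\subseteq\reli A$ and $\reli C\subseteq\reli\conv A$, since Lemma~\ref{l:reli:cute} needs equality of affine hulls rather than mere set inclusion; the main obstacle is therefore to record cleanly that $\overline{A}=\overline{C}$ propagates the equality of affine hulls through all the intermediate sets. Everything else is routine bookkeeping with the squeeze theorem and Fact~\ref{f:ri}.
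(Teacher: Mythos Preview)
Your proposal is correct and follows essentially the same approach as the paper: establish $C\approx\overline{C}$ from Fact~\ref{f:ri}\ref{f:ri:iii}, then squeeze repeatedly via Proposition~\ref{p:squeeze} to obtain $A\approx\overline{A}\approx\conv A\approx C$, and handle the relative interiors and the two consequences using $\reli A=\reli C$ and Fact~\ref{f:ri}. The only cosmetic difference is that the paper, having already obtained $\reli A=\reli C$ from $A\approx C$, verifies $\reli A\approx A$ directly by computing $\overline{\reli A}=\overline{\reli C}=\overline{C}$ and $\reli(\reli A)=\reli(\reli C)=\reli C$, rather than invoking Lemma~\ref{l:reli:cute} and a further squeeze as you do.
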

\begin{proof} We have
\begin{equation}
C\subseteq A\subseteq \conv A\subseteq \overline{C}
\quad\text{and}\quad
C\subseteq A\subseteq \overline{A}\subseteq\overline{C}.
\end{equation}
Since $C\approx \overline{C}$ by Fact~\ref{f:ri}\ref{f:ri:iii}, 
it follows from Proposition~\ref{p:squeeze} that
\begin{equation}
A\approx \overline{A}\approx\conv A\approx C.
\end{equation}
This implies 
\begin{equation}
\reli(\reli A)=\reli (\reli C)=\reli C=\reli A
\end{equation}
and
\begin{equation}
\overline{\reli A}=\overline{\reli C}=\overline{C}=\overline{A}
\end{equation}
by Fact~\ref{f:ri}\ref{f:ri:i}. Therefore, $\reli A\approx A.$
Applying this to $\conv A$, which is nearly convex,
it also follows that $\reli\conv A \approx \conv A$. 
Finally, 
\ref{o:c} holds because $A\approx C$ while 
\ref{t:c} follows from $\reli A=\reli C$
and Fact~\ref{f:ri}\ref{f:ri:i-}.
\end{proof}

\begin{remark} The assumption of  
near convexity in Lemma~\ref{l:nearconset} is necessary:
consider $A=\mathbb{Q}$ when $X=\RR$.
\end{remark}

\begin{lemma}[characterization of near convexity]
\label{l:ruger}
Let $A\subseteq X$. 
Then the following are equivalent. 
\begin{enumerate}
\item 
\label{l:rugeri}
$A$ is nearly convex.
\item
\label{l:rugerii}
$A\approx\conv A$.
\item
\label{l:rugerii+}
$A$ is nearly equal to a convex set. 
\item
\label{l:rugerii++}
$A$ is nearly equal to a nearly convex set. 
\item
\label{l:rugeriii}
$\reli\conv A \subseteq A$.
\end{enumerate}
\end{lemma}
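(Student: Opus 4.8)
The plan is to establish the cycle \ref{l:rugeri}~$\Rightarrow$~\ref{l:rugerii}~$\Rightarrow$~\ref{l:rugerii+}~$\Rightarrow$~\ref{l:rugerii++}~$\Rightarrow$~\ref{l:rugeri} and, separately, the equivalence \ref{l:rugeri}~$\Leftrightarrow$~\ref{l:rugeriii}; together these give all the asserted equivalences. The only external ingredients needed are Lemma~\ref{l:nearconset}, Fact~\ref{f:ri} (chiefly \ref{f:ri:i--} and \ref{f:ri:i}), transitivity of $\approx$, and the elementary fact that $\reli A\subseteq A$ for \emph{every} subset $A$ of $X$ (since the relative interior is an interior in the subspace topology of $\aff A$).

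The implication \ref{l:rugeri}~$\Rightarrow$~\ref{l:rugerii} is precisely part of Lemma~\ref{l:nearconset}. The implications \ref{l:rugerii}~$\Rightarrow$~\ref{l:rugerii+} and \ref{l:rugerii+}~$\Rightarrow$~\ref{l:rugerii++} are immediate: $\conv A$ is convex, and every convex set is (trivially) nearly convex, so in each case $A$ is nearly equal to a set of the required type.

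The substantive step is \ref{l:rugerii++}~$\Rightarrow$~\ref{l:rugeri}. Suppose $A\approx B$ with $B$ nearly convex. By Lemma~\ref{l:nearconset} applied to $B$, there is a convex set $C$ with $B\approx C$, hence $A\approx C$ by transitivity; thus $\overline A=\overline C$ and $\reli A=\reli C$. I claim that $C_0:=\reli C$ witnesses near convexity of $A$. It is convex by Fact~\ref{f:ri}\ref{f:ri:i--}. Since $\reli A\subseteq A$ and $\reli A=\reli C=C_0$, we get $C_0\subseteq A$. Finally $A\subseteq\overline A=\overline C=\overline{\reli C}=\overline{C_0}$ by Fact~\ref{f:ri}\ref{f:ri:i}. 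Hence $C_0\subseteq A\subseteq\overline{C_0}$, so $A$ is nearly convex.

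For \ref{l:rugeri}~$\Leftrightarrow$~\ref{l:rugeriii}: if $A$ is nearly convex then $A\approx\conv A$ by Lemma~\ref{l:nearconset}, so $\reli\conv A=\reli A\subseteq A$, giving \ref{l:rugeriii}; conversely, assuming $\reli\conv A\subseteq A$, set $C:=\reli\conv A$, which is convex by Fact~\ref{f:ri}\ref{f:ri:i--}, satisfies $C\subseteq A$ by hypothesis, and satisfies $A\subseteq\conv A\subseteq\overline{\conv A}=\overline{\reli\conv A}=\overline C$ by Fact~\ref{f:ri}\ref{f:ri:i}, whence $A$ is nearly convex. I do not anticipate a real obstacle: every step is bookkeeping with closures, convex hulls, and relative interiors. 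The one point that needs care is invoking $\reli A\subseteq A$ for the possibly non-convex set $A$ itself — this is exactly what lets the candidate squeezing set $\reli C$ (resp.\ $\reli\conv A$) lie below $A$.
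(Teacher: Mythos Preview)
Your proof is correct and takes essentially the same approach as the paper: both rely on Lemma~\ref{l:nearconset} for \ref{l:rugeri}$\Rightarrow$\ref{l:rugerii}, the elementary inclusion $\reli A\subseteq A$, and Fact~\ref{f:ri}\ref{f:ri:i} to build the squeezing convex set $\reli C$ (respectively $\reli\conv A$). The only differences are organizational---the paper's implication graph is arranged slightly differently, and for \ref{l:rugerii++}$\Rightarrow$\ref{l:rugeri} it works directly with $\reli\conv B$ rather than first passing to a convex $C$ via transitivity---but the underlying ideas are identical.
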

\begin{proof}
``\ref{l:rugeri}$\Rightarrow$\ref{l:rugerii}'': 
Apply Lemma~\ref{l:nearconset}. 
``\ref{l:rugerii}$\Rightarrow$\ref{l:rugeriii}'': 
Indeed, $\reli\conv A = \reli A \subseteq A$.
``\ref{l:rugeriii}$\Rightarrow$\ref{l:rugeri}'':
Set $C = \reli\conv A$.
By Fact~\ref{f:ri}\ref{f:ri:i},
$C \subseteq A \subseteq \cconv A
= \overline{\reli\conv A} = \overline{C}$.
``\ref{l:rugerii}$\Rightarrow$\ref{l:rugerii+}'': Clear.
``\ref{l:rugerii+}$\Rightarrow$\ref{l:rugeri}'': 
Suppose that $A\approx C$, where $C$ is convex. 
Then, using Fact~\ref{f:ri}\ref{f:ri:i}, 
$\reli C = \reli A \subseteq A \subseteq
\overline{A} = \overline{C} = \overline{\reli C}$. 
Hence $A$ is nearly convex. 
``\ref{l:rugerii+}$\Rightarrow$\ref{l:rugerii++}'': Clear. 
``\ref{l:rugerii++}$\Rightarrow$\ref{l:rugeri}'': 
Suppose $A\approx B$, where $B$ is nearly convex.
Then 
$\overline{A}=\overline B$ and $\reli A=\reli B$. 
As $B$ is nearly convex,
we obtain from Lemma~\ref{l:nearconset} 
and Fact~\ref{f:ri}\ref{f:ri:i} that
\begin{equation}
\reli \conv B=\reli B=\reli A\subseteq A\subseteq \overline{A}=
\overline{B}=\overline{\conv B}=
\overline{\reli \conv B}.
\end{equation}
Therefore, $A$ is nearly convex.
\end{proof}

\begin{remark}
The condition appearing in Lemma~\ref{l:ruger}\ref{l:rugeriii}
was also used by Minty \cite{Minty61} and named ``almost-convex''.
\end{remark}

\begin{remark}
\label{r:mike}
Br\'ezis and Haraux \cite{B-H} define, 
for two subsets $A$ and $B$ of $X$, 
\begin{equation}
A\simeq B \quad :\Leftrightarrow\quad
\text{$\overline{A} = \overline{B}$ ~~and~~ $\inte A = \inte B$.}
\end{equation}
\begin{enumerate}
\item
In view of \eqref{e:mike}, it is clear that 
$A\approx B$ $\Rightarrow$ $A\simeq B$. 
\item
On the other hand, 
$A\simeq B$ $\not\Rightarrow$ $A\approx B$:
indeed, consider $X=\mathbb{R}^2$,
$A = \mathbb{Q}\times\{0\}$, and 
$B = \mathbb{R}\times\{0\}$. 
\item 
The implications 
\ref{l:rugerii+}$\Rightarrow$\ref{l:rugeri}
and \ref{l:rugerii}$\Rightarrow$\ref{l:rugeri}
in Lemma~\ref{l:ruger} fails for $\simeq$: 
indeed, consider $X=\mathbb{R}^2$,
$A=(\mathbb{R}\smallsetminus\{0\})\times\{0\}$
and $C=\conv A = \mathbb{R}\times\{0\}$.
Then $C$ is convex and $A\simeq C$.
However, $A$ is not nearly convex 
because $\reli A \neq \reli\overline{A}$. 
\end{enumerate}
\end{remark}

\begin{proposition}
\label{p:char:near}
Let $A$ and $B$ be nearly convex subsets of $X$. 
Then the following are equivalent. 
\begin{enumerate}
\item\label{char:i} $A\approx B$.
\item\label{char:ii} $\overline{A}=\overline{B}$.
\item \label{char:iii} $\reli A=\reli B$.
\item \label{char:iv} $\overline{\conv A}=\overline{\conv B}$.
\item \label{char:v} $\reli\conv A=\reli \conv B$.
\end{enumerate}
\end{proposition}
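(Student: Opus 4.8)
The plan is to reduce every condition to a statement about the convex hulls $\conv A$ and $\conv B$, and then invoke Fact~\ref{f:ri}\ref{f:ri:vi}, which is available because $\conv A$ and $\conv B$ \emph{are} convex. First I would record the key consequence of Lemma~\ref{l:nearconset}: since $A$ is nearly convex, $A\approx\conv A$, so $\overline{A}=\overline{\conv A}$ and $\reli A=\reli\conv A$; similarly $\overline{B}=\overline{\conv B}$ and $\reli B=\reli\conv B$. These identities give immediately \ref{char:ii}$\Leftrightarrow$\ref{char:iv} and \ref{char:iii}$\Leftrightarrow$\ref{char:v}.

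Next I would treat the convex-hull conditions. Because $\conv A$ and $\conv B$ are convex, Fact~\ref{f:ri}\ref{f:ri:vi} applied to them yields
\begin{equation*}
\overline{\conv A}=\overline{\conv B}\quad\Longleftrightarrow\quad \reli\conv A=\reli\conv B,
\end{equation*}
that is, \ref{char:iv}$\Leftrightarrow$\ref{char:v}. Combining with the previous paragraph, the four conditions \ref{char:ii}, \ref{char:iii}, \ref{char:iv}, \ref{char:v} are all equivalent.

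It then remains to link these to \ref{char:i}. The implication \ref{char:i}$\Rightarrow$\ref{char:ii} is just the definition of $\approx$. Conversely, if \ref{char:ii} holds, then by the equivalences already established \ref{char:iii} holds as well, so $\overline{A}=\overline{B}$ and $\reli A=\reli B$, which is precisely $A\approx B$, i.e.\ \ref{char:i}. This closes the circle.

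I do not expect a substantive obstacle here: the proposition is essentially a bookkeeping corollary of Lemma~\ref{l:nearconset} and Fact~\ref{f:ri}\ref{f:ri:vi}. The one point requiring care is that one should not attempt to prove \ref{char:ii}$\Leftrightarrow$\ref{char:iii} by comparing $A$ and $B$ directly — the passage between closure and relative interior needs convexity (via Fact~\ref{f:ri}\ref{f:ri:vi}), so the argument must be routed through $\conv A$ and $\conv B$, which is exactly what near convexity allows.
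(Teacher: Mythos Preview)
Your proof is correct and relies on the same two ingredients as the paper's proof---Lemma~\ref{l:nearconset} (giving $A\approx\conv A$, hence $\overline{A}=\overline{\conv A}$ and $\reli A=\reli\conv A$) and the closure/relative-interior calculus of Fact~\ref{f:ri}. The only difference is organizational: the paper argues via the cycle \ref{char:i}$\Rightarrow$\ref{char:ii}$\Rightarrow$\ref{char:iii}$\Rightarrow$\ref{char:iv}$\Rightarrow$\ref{char:v}$\Rightarrow$\ref{char:i}, whereas you establish the biconditionals \ref{char:ii}$\Leftrightarrow$\ref{char:iv} and \ref{char:iii}$\Leftrightarrow$\ref{char:v} first and then close with Fact~\ref{f:ri}\ref{f:ri:vi}; both routes are equally short.
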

\begin{proof}
``\ref{char:i}$\Rightarrow$\ref{char:ii}'': 
This is clear from the definition of $\approx$.
``\ref{char:ii}$\Rightarrow$\ref{char:iii}'': 
$\reli{\overline{A}}=\reli A$
and $\reli\overline{B}=\reli B$ by Lemma~\ref{l:nearconset}.
``\ref{char:iii}$\Rightarrow$\ref{char:iv}'': 
$\overline{\reli A}=\overline{\conv A}$
and $\overline{\reli B}=\overline{\conv B}$ by Lemma~\ref{l:nearconset}.
``\ref{char:iv}$\Rightarrow$\ref{char:v}'': 
$\reli{\overline{\conv A}}=\reli \conv A$
and $\reli{\overline{\conv B}}=\reli \conv B$.
``\ref{char:v}$\Rightarrow$\ref{char:i}'': 
Lemma~\ref{l:nearconset} gives that
$\reli \conv A =\reli A$ and $\reli \conv B =\reli B$ so that
$\reli A=\reli B$, 
$\overline{\reli \conv A}=\overline{\conv A}=\overline{A}$ and
$\overline{\reli \conv B}=\overline{\conv B}=\overline{B}$ 
so that $\overline{A}=\overline{B}$.
Hence \ref{char:i} holds.
\end{proof}

In order to study addition of nearly convex sets, 
we require the following result.

\begin{lemma}\label{t:split} 
Let $(A_i)_{i\in I}$ be a family of nearly convex subsets of $X$,
and let $(\lambda_i)_{i\in I}$ be a family of real numbers.
Then 
$\sum_{i\in I}\lambda_{i}A_{i}$ is nearly convex, and 
$\reli(\sum_{i\in I}\lambda_{i}A_{i})=\sum_{i\in I}\lambda_{i}\reli
A_{i}$.
\end{lemma}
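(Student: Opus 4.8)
The plan is to reduce the statement to the two basic facts about relative interiors that we already have in hand, namely that $\reli$ commutes with scalar multiplication (Fact~\ref{f:ri}\ref{f:ri:scal}) and with finite Minkowski sums (Fact~\ref{f:ri}\ref{f:ri:sum}), together with the characterization of near convexity from Lemma~\ref{l:nearconset}. First I would record that for each $i\in I$ the set $A_i$ is nearly convex, so by Lemma~\ref{l:nearconset} we have $A_i\approx\conv A_i$; in particular $\reli A_i=\reli\conv A_i$ and $\overline{A_i}=\overline{\conv A_i}$. Since $\reli(\lambda_i\conv A_i)=\lambda_i\reli\conv A_i=\lambda_i\reli A_i$ by Fact~\ref{f:ri}\ref{f:ri:scal}, and likewise the closures behave well, each $\lambda_iA_i$ is nearly convex with $\lambda_i\conv A_i$ as a witnessing convex set, and $\reli(\lambda_iA_i)=\lambda_i\reli A_i$.

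Next I would set $C_i:=\reli\conv A_i$, a convex set with $C_i\subseteq A_i\subseteq\overline{C_i}$ (using Fact~\ref{f:ri}\ref{f:ri:i}), and form $C:=\sum_{i\in I}\lambda_iC_i$, which is convex as a finite sum of convex sets. The inclusion chain $C=\sum_i\lambda_iC_i\subseteq\sum_i\lambda_iA_i$ is immediate. For the upper inclusion I would argue $\sum_i\lambda_iA_i\subseteq\sum_i\lambda_i\overline{C_i}\subseteq\overline{\sum_i\lambda_iC_i}=\overline{C}$, where the middle step uses that the closure of a finite sum contains the sum of the closures (continuity of addition and scalar multiplication on the finite-dimensional space $X$). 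This yields $C\subseteq\sum_i\lambda_iA_i\subseteq\overline{C}$, so $\sum_{i\in I}\lambda_iA_i$ is nearly convex by definition.

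For the relative-interior identity, near convexity of $\sum_i\lambda_iA_i$ together with Lemma~\ref{l:nearconset} gives $\sum_i\lambda_iA_i\approx C$, hence $\reli(\sum_i\lambda_iA_i)=\reli C=\reli(\sum_i\lambda_iC_i)$. Now apply Fact~\ref{f:ri}\ref{f:ri:sum} (inductively over the finite index set $I$) and then Fact~\ref{f:ri}\ref{f:ri:scal} to obtain $\reli(\sum_i\lambda_iC_i)=\sum_i\reli(\lambda_iC_i)=\sum_i\lambda_i\reli C_i=\sum_i\lambda_i\reli\conv A_i=\sum_i\lambda_i\reli A_i$, the last equality by Lemma~\ref{l:nearconset}. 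Chaining these gives $\reli(\sum_{i\in I}\lambda_iA_i)=\sum_{i\in I}\lambda_i\reli A_i$.

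I expect the only genuinely delicate point to be the upper inclusion $\sum_i\lambda_i\overline{A_i}\subseteq\overline{\sum_i\lambda_iA_i}$: it is the one place where finite-dimensionality (or at least continuity of the vector-space operations) is used, and one must be slightly careful that it is a finite sum so no closure-of-infinite-sum subtleties arise. Everything else is bookkeeping with $\approx$ and the cited parts of Fact~\ref{f:ri}. A minor secondary point is handling possibly negative or zero $\lambda_i$: scalar multiplication by any real (including $0$) still commutes with $\reli$ by Fact~\ref{f:ri}\ref{f:ri:scal}, and $0\cdot A_i=\{0\}$ is trivially convex, so no case distinction is actually needed.
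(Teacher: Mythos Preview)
Your argument is correct and follows essentially the same route as the paper: pick convex witnesses $C_i$ with $C_i\subseteq A_i\subseteq\overline{C_i}$, sandwich $\sum_i\lambda_iA_i$ between $\sum_i\lambda_iC_i$ and its closure to get near convexity and $\sum_i\lambda_iA_i\approx\sum_i\lambda_iC_i$, then push $\reli$ through the sum and scalars via Fact~\ref{f:ri}\ref{f:ri:scal}\&\ref{f:ri:sum} and translate back using $\reli C_i=\reli A_i$. The only cosmetic difference is that the paper takes $C_i$ to be an arbitrary convex set from the definition of near convexity, whereas you make the specific choice $C_i=\reli\conv A_i$; your opening paragraph about each $\lambda_iA_i$ individually being nearly convex is not actually used later and could be dropped.
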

\begin{proof}
For every $i\in I$, 
there exists a convex subset $C_{i}$ of $X$ such that
$C_{i}\subseteq A_{i}\subseteq \overline{C_{i}}$. 
We have
\begin{equation}
\sum_{i\in I}\lambda_{i}C_{i}\subseteq 
\sum_{i\in I}\lambda_{i}A_{i}
\subseteq \sum_{i\in I}\lambda_{i}\overline{C_{i}}\subseteq
\overline{\sum_{i\in I}\lambda_{i}C_{i}}, 
\end{equation}
which yields the near convexity of 
$\sum_{i\in I}\lambda_{i}A_{i}$ and
$\sum_{i\in I}\lambda_i A_i \approx \sum_{i\in I}\lambda_i C_i$
by Lemma~\ref{l:nearconset}. 
Moreover, by Fact~\ref{f:ri}\ref{f:ri:scal}\&\ref{f:ri:sum} 
and Lemma~\ref{l:nearconset}, 
\begin{equation}
\reli\Big(\sum_{i\in I}\lambda_{i}A_{i}\Big)
=\reli\Big(\sum_{i\in I}\lambda_{i}C_{i}\Big)
=\sum_{i\in I}\reli\big(\lambda_{i}C_{i}\big)=
\sum_{i\in I}\lambda_{i}\reli C_{i}
=\sum_{i\in I}\lambda_{i}\reli A_{i}.
\end{equation}
This completes the proof.
\end{proof}

\begin{theorem}
\label{t:ruger}
Let $(A_i)_{i\in I}$ be a family of nearly convex subsets of $X$,
and let $(B_i)_{i\in I}$ be a family of subsets of $X$ such that
$A_i \approx B_i$, for every $i\in I$. 
Then $\sum_{i\in I}A_{i}$ is nearly convex and
$ \sum_{i\in I}A_{i}\approx \sum_{i\in I}B_{i}$.
\end{theorem}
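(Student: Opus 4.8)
The plan is to deduce the statement from Lemma~\ref{t:split} and Proposition~\ref{p:char:near}, with a preliminary normalization step. The first thing I would do is upgrade the hypothesis on the $B_i$: since $B_i\approx A_i$ and $A_i$ is nearly convex, Lemma~\ref{l:ruger} (precisely the implication \ref{l:rugerii++}$\Rightarrow$\ref{l:rugeri}) shows that each $B_i$ is itself nearly convex. This is the one spot where the characterization ``nearly equal to a nearly convex set implies nearly convex'' is genuinely needed, because the $B_i$ are not assumed to sit between a convex set and its closure in any visible way; without this step one cannot legitimately feed the family $(B_i)_{i\in I}$ into Lemma~\ref{t:split}.

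Next I would apply Lemma~\ref{t:split} twice with all coefficients equal to $1$: once to $(A_i)_{i\in I}$ and once to $(B_i)_{i\in I}$. This gives that both $\sum_{i\in I}A_i$ and $\sum_{i\in I}B_i$ are nearly convex, and that $\reli\big(\sum_{i\in I}A_i\big)=\sum_{i\in I}\reli A_i$ and $\reli\big(\sum_{i\in I}B_i\big)=\sum_{i\in I}\reli B_i$.

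Now I would invoke the hypothesis $A_i\approx B_i$, which in particular yields $\reli A_i=\reli B_i$ for every $i\in I$. Summing these equalities over $i$ and combining with the two relative-interior identities from the previous step produces $\reli\big(\sum_{i\in I}A_i\big)=\reli\big(\sum_{i\in I}B_i\big)$. Since $\sum_{i\in I}A_i$ and $\sum_{i\in I}B_i$ are both nearly convex and have the same relative interior, Proposition~\ref{p:char:near} (the implication \ref{char:iii}$\Rightarrow$\ref{char:i}) delivers $\sum_{i\in I}A_i\approx\sum_{i\in I}B_i$, which finishes the argument together with the near convexity already established.

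I do not expect any real obstacle here: the theorem is essentially a bookkeeping consequence of the preceding lemmas, a clean chain of equalities of closures and relative interiors. The only point demanding attention is the initial observation that near equality to a nearly convex set forces near convexity, so that Lemma~\ref{t:split} is applicable to the $B_i$; after that, everything is automatic.
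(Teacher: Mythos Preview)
Your proof is correct and follows essentially the same route as the paper: upgrade each $B_i$ to a nearly convex set via Lemma~\ref{l:ruger}, apply Lemma~\ref{t:split} to both families, and chain the relative-interior equalities. The only cosmetic difference is that the paper finishes by directly verifying $\overline{\sum_{i\in I}A_i}=\overline{\sum_{i\in I}B_i}$ as well, whereas you check only the relative-interior identity and then invoke Proposition~\ref{p:char:near}\ref{char:iii}$\Rightarrow$\ref{char:i}; both finishes are equally valid.
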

\begin{proof} 
Lemma~\ref{l:ruger} implies that $B_{i}$ is nearly convex,
for every $i\in I$. 
By Lemma~\ref{t:split}, 
we have that $\sum_{i\in I}A_{i}$ is nearly convex and
\begin{equation}
\reli \sum_{i\in I}A_{i}=\sum_{i\in I}\reli A_{i}=
\sum_{i\in I}\reli B_{i}=\reli\sum_{i\in I}B_{i}.
\end{equation}
Furthermore, 
\begin{equation}
\overline{\sum_{i\in I}A_{i}}=
\overline{\sum_{i\in I}\overline{A_{i}}}
=\overline{\sum_{i\in I}\overline{B_{i}}}
=\overline{\sum_{i\in I}B_{i}}
\end{equation}
and the result follows. 
\end{proof}

\begin{remark}
Theorem~\ref{t:ruger} fails without the near convexity assumption:
indeed, when $X=\mathbb{R}$ and $m=2$, 
consider  $A_1 = A_2 = \mathbb{Q}$ and
$B_1=B_2=\RR\smallsetminus\mathbb{Q}$.
Then $A_i\approx B_i$, for every $i\in I$, yet 
$A_1+A_2 = \mathbb{Q} \not\approx\RR = B_1+B_2$.
\end{remark}

\begin{theorem}\label{t:equivalent}
Let $(A_i)_{i\in I}$ be a family of nearly convex subsets of $X$,
and let $(\lambda_i)_{i\in I}$ be a family of real numbers.
For every $i\in I$, take $B_i\in \big\{ A_i,
\overline{A_{i}},\conv A_{i},\reli A_{i},\reli\conv A_i\big\}$.
Then
\begin{equation}\label{t:review}
\sum_{i\in I}\lambda_{i}A_{i}\approx \sum_{i\in I}\lambda_{i}B_{i}. 
\end{equation}
\end{theorem}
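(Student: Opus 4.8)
The plan is to reduce Theorem~\ref{t:equivalent} to the combination of Lemma~\ref{t:split} and Theorem~\ref{t:ruger}. The essential observation is that for each $i\in I$, every candidate set $B_i\in\{A_i,\overline{A_i},\conv A_i,\reli A_i,\reli\conv A_i\}$ satisfies $A_i\approx B_i$: this is exactly the chain of near equalities established in Lemma~\ref{l:nearconset}, since $A_i$ is nearly convex. So the hypotheses of Theorem~\ref{t:ruger} are met for the family $(B_i)_{i\in I}$ (with each $B_i$ nearly convex as well, by Lemma~\ref{l:ruger}).

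First I would handle the scalars. For each $i\in I$, $\lambda_i A_i$ is nearly convex by Lemma~\ref{t:split} (applied to the singleton family, or simply noting $\lambda_i C_i\subseteq\lambda_i A_i\subseteq\lambda_i\overline{C_i}\subseteq\overline{\lambda_i C_i}$ when $C_i\subseteq A_i\subseteq\overline{C_i}$ with $C_i$ convex). I then claim $\lambda_i A_i\approx\lambda_i B_i$. If $\lambda_i\neq 0$, scaling by $\lambda_i$ is a homeomorphism, so it commutes with closure, and it commutes with relative interior by Fact~\ref{f:ri}\ref{f:ri:scal}; hence $\overline{\lambda_i A_i}=\lambda_i\overline{A_i}=\lambda_i\overline{B_i}=\overline{\lambda_i B_i}$ and likewise $\reli\lambda_i A_i=\reli\lambda_i B_i$, using $A_i\approx B_i$. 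If $\lambda_i=0$, then $\lambda_i A_i=\{0\}=\lambda_i B_i$ (as $A_i,B_i$ are nonempty whenever they appear nontrivially; the empty case is trivial since then $\sum$ is empty), so the near equality is immediate.

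Next I would apply Theorem~\ref{t:ruger} to the families $(\lambda_i A_i)_{i\in I}$ and $(\lambda_i B_i)_{i\in I}$: each $\lambda_i A_i$ is nearly convex and $\lambda_i A_i\approx\lambda_i B_i$, so the theorem yields that $\sum_{i\in I}\lambda_i A_i$ is nearly convex and $\sum_{i\in I}\lambda_i A_i\approx\sum_{i\in I}\lambda_i B_i$, which is precisely \eqref{t:review}.

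I do not expect a genuine obstacle here; the result is essentially a corollary of the machinery already built. The only point requiring a moment of care is the bookkeeping around $\lambda_i=0$ and around empty sets: if some $A_i=\varnothing$ then $A_i$ is (vacuously) nearly convex, every $B_i$ in the list is also $\varnothing$, and $\sum_{i\in I}\lambda_i A_i=\varnothing=\sum_{i\in I}\lambda_i B_i$, so \eqref{t:review} holds trivially; otherwise all the sets in play are nonempty and the argument above runs without change. One could alternatively absorb the scalars into Theorem~\ref{t:ruger} directly by noting $\reli(\lambda_i A_i)=\lambda_i\reli A_i$ via Lemma~\ref{t:split}, but the homeomorphism argument for $\lambda_i\neq0$ is the cleanest route.
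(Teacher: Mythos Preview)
Your proposal is correct and follows essentially the same route as the paper: use Lemma~\ref{l:nearconset} to obtain $A_i\approx B_i$ for each $i$, then invoke Theorem~\ref{t:ruger}. The paper's proof is just these two sentences; you have simply made explicit the passage from $A_i\approx B_i$ to $\lambda_i A_i\approx \lambda_i B_i$ (and the degenerate cases $\lambda_i=0$, $A_i=\varnothing$) that the paper leaves implicit when it says ``apply Theorem~\ref{t:ruger}''.
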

\begin{proof}
By Lemma~\ref{l:nearconset}, $A_{i}\approx B_{i}$ for every $i\in I$. 
Now apply Theorem~\ref{t:ruger}.
\end{proof}

\begin{corollary} 
\label{c:sprink}
Let $(A_i)_{i\in I}$ be a family of nearly convex subsets of $X$,
and let $(\lambda_i)_{i\in I}$ be a family of real numbers.
Suppose that there exists $j\in I$ such that $\lambda_j \neq 0$. 
Then
\begin{equation}
\label{t:interior}
\big(\inte\lambda_{j}A_{j}\big)+\sum_{i\in I\smallsetminus\{j\}}\lambda_{i}
\overline{A_{i}}\subseteq \inte \sum_{i\in I}\lambda_{i}A_{i};
\end{equation}
consequently, the following hold.
\begin{enumerate}
\item 
\label{sprink1}
If $(0\in \inte A_{j})\cap \bigcap_{i\in
I\smallsetminus\{j\}}\overline{A_i}$, then 
$0\in  \inte \sum_{i\in I}\lambda_{i}A_{i}$.
\item 
\label{sprink2}
If $A_{j}=X$, then $\sum_{i\in I}\lambda_{i}A_{i}=X$.
\end{enumerate}
\end{corollary}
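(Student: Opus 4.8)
The plan is to read \eqref{t:interior} off from Theorem~\ref{t:equivalent} together with a completely elementary observation about Minkowski sums. First I would record that for \emph{arbitrary} subsets $U$ and $S$ of $X$ one has $(\inte U)+S\subseteq\inte(U+S)$: if $u\in\inte U$, pick $\varepsilon>0$ with $u+\varepsilon\ball\subseteq U$; then $u+s+\varepsilon\ball\subseteq U+S$ for every $s\in S$, so $u+s\in\inte(U+S)$. I would also use that, since $\lambda_j\neq0$, the map $x\mapsto\lambda_j x$ is a homeomorphism of $X$, so that $\inte(\lambda_j A_j)=\lambda_j\inte A_j$ (and, more simply, $\lambda_j X=X$ when needed below).

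Next, apply Theorem~\ref{t:equivalent} with the choice $B_j=A_j$ and $B_i=\overline{A_i}$ for $i\in I\smallsetminus\{j\}$; this gives
\[
\sum_{i\in I}\lambda_i A_i\;\approx\;\lambda_j A_j+\sum_{i\in I\smallsetminus\{j\}}\lambda_i\overline{A_i},
\]
whence, by \eqref{e:mike},
\[
\inte\sum_{i\in I}\lambda_i A_i=\inte\Big(\lambda_j A_j+\sum_{i\in I\smallsetminus\{j\}}\lambda_i\overline{A_i}\Big).
\]
Feeding the first observation with $U=\lambda_j A_j$ and $S=\sum_{i\in I\smallsetminus\{j\}}\lambda_i\overline{A_i}$ then yields
\[
\big(\inte\lambda_j A_j\big)+\sum_{i\in I\smallsetminus\{j\}}\lambda_i\overline{A_i}=(\inte U)+S\subseteq\inte(U+S)=\inte\sum_{i\in I}\lambda_i A_i,
\]
which is exactly \eqref{t:interior}.

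For \ref{sprink1}: the hypothesis gives $0=\lambda_j\cdot0\in\lambda_j\inte A_j=\inte\lambda_j A_j$ and, for each $i\in I\smallsetminus\{j\}$, $0=\lambda_i\cdot0\in\lambda_i\overline{A_i}$; adding these shows $0$ lies in the left-hand side of \eqref{t:interior}, hence in $\inte\sum_{i\in I}\lambda_i A_i$. For \ref{sprink2}: from $A_j=X$ we get $\inte\lambda_j A_j=\inte(\lambda_j X)=\inte X=X$, and since the remaining $A_i$ are nonempty the set $S=\sum_{i\in I\smallsetminus\{j\}}\lambda_i\overline{A_i}$ is nonempty, so $X=X+S$; then \eqref{t:interior} reads $X=X+S\subseteq\inte\sum_{i\in I}\lambda_i A_i\subseteq\sum_{i\in I}\lambda_i A_i\subseteq X$, forcing equality throughout. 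The argument is routine once Theorem~\ref{t:equivalent} is in hand, so there is no real obstacle; the only points demanding a little care are the interchange of $\inte$ with the scalar $\lambda_j$ — valid precisely because $\lambda_j\neq0$ — and, in \ref{sprink2}, the (tacit) nonemptiness of the remaining $A_i$ that makes $X+S=X$ legitimate.
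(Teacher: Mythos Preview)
Your proof is correct and follows essentially the same approach as the paper: both invoke Theorem~\ref{t:equivalent} with $B_j=A_j$ and $B_i=\overline{A_i}$ for $i\neq j$, then use that $(\inte\lambda_jA_j)+S$ is open to land in the interior of the sum. Your version is slightly more streamlined---you pass directly from near equality to equality of interiors via \eqref{e:mike}, whereas the paper routes through relative interiors before concluding---but the substance is the same, and your explicit flagging of the tacit nonemptiness assumption in \ref{sprink2} is a nice touch.
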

\begin{proof}
By Theorem~\ref{t:equivalent},
$\reli (\lambda_{j}A_{j}+\sum_{i\in I\smallsetminus\{j\}}
\lambda_{i}\overline{A_{i}})=
\reli\sum_{i\in I}\lambda_{i}A_{i}$. 
Since
\begin{equation}
\big(\inte\lambda_{j}A_{j}\big)+
\sum_{i\in I\smallsetminus\{j\}}\lambda_{i}\overline{A_{i}}
\subseteq\reli \Big(\lambda_{j}A_{j}+
\sum_{i\in I\smallsetminus\{j\}}\lambda_{i}\overline{A_{i}}\Big),
\end{equation}
and $\big(\inte\lambda_{j}A_{j}\big)+\sum_{i\in I\smallsetminus\{j\}}
\lambda_{i}\overline{A_{i}}$ is an open set,
\eqref{t:interior} follows. 
\ref{sprink1} and \ref{sprink2} follow 
from \eqref{t:interior}.
\end{proof}

We develop a complementary cancellation result whose
proof relies on
R{\aa}dstr{\"o}m's cancellation.

\begin{fact}
\label{f:Rad}
{\rm (See \cite{Radstrom}.)}
Let $A$ be a nonempty subset of $X$,
let $E$ be a nonempty bounded subset of $X$, and
let $B$ be a nonempty closed convex subset of $X$
such that $A+E \subseteq B+E$.
Then $A\subseteq B$.
\end{fact}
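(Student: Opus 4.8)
The plan is to prove the statement directly: fix an arbitrary $a\in A$ and show that $a\in B$, exploiting that $B$ is closed and convex while $E$ need only be bounded. The main device is to iterate the inclusion $A+E\subseteq B+E$ and then average.

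First I would fix some $e_0\in E$ (possible since $E\neq\varnothing$) and recursively construct sequences $(b_n)_{n\geq 1}$ in $B$ and $(e_n)_{n\geq 1}$ in $E$ as follows: given $e_{n-1}\in E$, the point $a+e_{n-1}$ lies in $A+E\subseteq B+E$, so there exist $b_n\in B$ and $e_n\in E$ with $a+e_{n-1}=b_n+e_n$. Summing these identities over $k=1,\dots,n$ and telescoping the $e$-terms yields $na+e_0=\sum_{k=1}^n b_k+e_n$, that is,
\[
a+\frac{e_0-e_n}{n}=\frac1n\sum_{k=1}^n b_k .
\]
Now the right-hand side is a convex combination of $b_1,\dots,b_n\in B$, hence lies in $B$ by convexity of $B$. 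Since $E$ is bounded, say $\sup_{e\in E}\|e\|=\rho<\infty$, the left-hand side satisfies $\big\|\big(a+\tfrac{e_0-e_n}{n}\big)-a\big\|\leq 2\rho/n\to 0$, so $a=\lim_{n}\tfrac1n\sum_{k=1}^n b_k$. As $B$ is closed, $a\in B$; and since $a\in A$ was arbitrary, $A\subseteq B$.

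The only subtlety, and the essential idea, is the averaging step: a single application of $A+E\subseteq B+E$ yields nothing useful, but after $n$ iterations the accumulated discrepancy collapses to the single bounded difference $e_0-e_n$ divided by $n$, which vanishes, while convexity lets us fold $b_1+\cdots+b_n$ back into $B$. Beyond this, there is no real obstacle: one only needs to check that each invocation of the hypothesis is legitimate (it is, since $a+e_{n-1}\in A+E$ at every stage) and to invoke closedness of $B$ exactly once, at the very end.
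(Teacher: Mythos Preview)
Your proof is correct; this is precisely R{\aa}dstr{\"o}m's original averaging argument. The paper does not supply its own proof but simply cites \cite{Radstrom}, so your approach coincides with the one the paper defers to.
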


\begin{theorem}
Let $A$ and $B$ be nonempty nearly convex subsets of $X$,
and let $E$ be a nonempty compact subset of $X$
such that $A+E\approx B+E$. Then $A\approx B$.
\end{theorem}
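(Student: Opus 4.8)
The plan is to reduce the whole statement to an equality of closures and then invoke R\aa dstr\"om's cancellation. First I would record that, since $A$ and $B$ are nonempty and nearly convex, Lemma~\ref{l:nearconset}\ref{o:c} guarantees that $\overline{A}$ and $\overline{B}$ are nonempty closed \emph{convex} sets; these are precisely the objects to which Fact~\ref{f:Rad} will be applied. Note also that $A+E$ and $B+E$ are nonempty, since $A$, $B$, and $E$ are.

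The next step is the topological observation that, because $E$ is compact, $\overline{A+E}=\overline{A}+E$ (and likewise $\overline{B+E}=\overline{B}+E$). Indeed, $A+E\subseteq\overline{A}+E$, and $\overline{A}+E$ is closed as the sum of a closed set and a compact set, so $\overline{A+E}\subseteq\overline{A}+E$; conversely, approximating any point of $\overline{A}$ by points of $A$ and adding a fixed element of $E$ shows $\overline{A}+E\subseteq\overline{A+E}$. I expect this elementary lemma to be the only place requiring any care, and it is the point where compactness of $E$ (rather than mere boundedness or closedness) is used.

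From the hypothesis $A+E\approx B+E$ we have, in particular, $\overline{A+E}=\overline{B+E}$, so the previous step yields $\overline{A}+E=\overline{B}+E$. Applying Fact~\ref{f:Rad} with the nonempty set $\overline{A}$, the nonempty bounded set $E$, and the nonempty closed convex set $\overline{B}$ gives $\overline{A}\subseteq\overline{B}$; exchanging the roles of $A$ and $B$ gives $\overline{B}\subseteq\overline{A}$, hence $\overline{A}=\overline{B}$.

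Finally, since $A$ and $B$ are nearly convex and $\overline{A}=\overline{B}$, Proposition~\ref{p:char:near} (the equivalence of \ref{char:i} and \ref{char:ii}) immediately yields $A\approx B$, which completes the argument. I would remark that this route never uses the relative-interior half of the hypothesis $A+E\approx B+E$, nor any near convexity of the sets $A+E$ and $B+E$ — which could in fact fail, since $E$ is not assumed convex.
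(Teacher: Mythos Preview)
Your proof is correct and follows essentially the same route as the paper: obtain $\overline{A}=\overline{B}$ via R\aa dstr\"om cancellation (Fact~\ref{f:Rad}) and then invoke Proposition~\ref{p:char:near}. The only cosmetic difference is that the paper writes $A+E\subseteq\overline{A+E}=\overline{B+E}=\overline{B}+E$ and applies Fact~\ref{f:Rad} directly to $A$ and $\overline{B}$ (yielding $A\subseteq\overline{B}$, hence $\overline{A}\subseteq\overline{B}$), whereas you first pass to closures on both sides and cancel from $\overline{A}+E=\overline{B}+E$; both arguments rely on the same compactness-based identity $\overline{S+E}=\overline{S}+E$, and your concluding remarks about which parts of the hypothesis are actually used are accurate.
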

\begin{proof}
We have
$A+E \subseteq \overline{A+E}
= \overline{B+E} = \overline{B}+E$.
Fact~\ref{f:Rad} implies $A\subseteq \overline{B}$;
hence, $\overline{A}\subseteq\overline{B}$.
Analogously, $\overline{B}\subseteq\overline{A}$ and thus
$\overline{A}=\overline{B}$.
Now apply Proposition~\ref{p:char:near}. 
\end{proof}

Finally, we give a result concerning the interior of nearly convex sets.
\begin{proposition}
Let $A$ be a nearly convex subset of $X$. Then
$\inte A=\inte \conv A=\inte \overline{A}$.
\end{proposition}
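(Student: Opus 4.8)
The plan is to read the result off from what has already been established. Since $A$ is nearly convex, Lemma~\ref{l:nearconset} gives the chain of near equalities $A\approx\overline{A}\approx\reli A\approx\conv A\approx\reli\conv A$; in particular $A\approx\conv A$ and $A\approx\overline{A}$. The only remaining ingredient is the elementary observation \eqref{e:mike}: whenever $S\approx T$ one has $\inte S=\inte T$, because $\overline S=\overline T$ and $\reli S=\reli T$, while the relative interior of a set coincides with its interior precisely when the latter is nonempty (and if $\inte S=\varnothing$, then, $\reli$ and $\inte$ being taken relative to affine hulls, $\inte T=\varnothing$ as well). Applying \eqref{e:mike} to the two near equalities above yields $\inte A=\inte\conv A=\inte\overline{A}$, which is the claim.

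There is no real obstacle here: the statement is essentially a corollary of Lemma~\ref{l:nearconset} together with \eqref{e:mike}. If one prefers a self-contained argument not routed through Lemma~\ref{l:nearconset}, one fixes a convex set $C$ with $C\subseteq A\subseteq\overline{C}$ and observes that $C\subseteq A\subseteq\conv A\subseteq\overline{C}$ and $C\subseteq A\subseteq\overline{A}\subseteq\overline{C}$, so that Proposition~\ref{p:squeeze} gives $A\approx\conv A$ and $A\approx\overline{A}$, and one then concludes via \eqref{e:mike} as before. Either way, the single point worth recording carefully is \eqref{e:mike}, which is what converts the near equality into an honest equality of interiors; the rest is bookkeeping.
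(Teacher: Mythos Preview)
Your proof is correct and follows essentially the same approach as the paper: invoke Lemma~\ref{l:nearconset} to obtain $A\approx\conv A$ and $A\approx\overline{A}$, then apply \eqref{e:mike} to pass from near equality to equality of interiors. The paper's own proof is simply a two-sentence version of what you wrote.
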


\begin{proof}
By Lemma~\ref{l:nearconset},
$A\approx B$, where $B\in\big\{\overline{A},\conv A\big\}$.
Now recall \eqref{e:mike}. 
\end{proof}

\section{Maximally Monotone Operators}

\label{sec:mon}

Let $A\colon X\To X$, i.e., $A$ is a set-valued operator on $X$ in the
sense that $(\forall x\in X)$ $Ax\subseteq X$. The graph of $A$ is
denoted by $\gr A$. Then $A$ is \emph{monotone} (on $X$) if
\begin{equation}
(\forall (x,x^*)\in\gr A)(\forall (y,y^*)\in\gr A)\quad
\scal{x-y}{x^*-y^*} \geq 0,
\end{equation}
and $A$ is \emph{maximally monotone} if $A$ admits no proper monotone
extension. Classical examples of monotone operators are
subdifferential operators of functions that are convex, lower
semicontinuous, and proper;
linear operators with a positive symmetric part.
See, e.g., \cite{BC}, \cite{BorVan}, \cite{Brez73}, \cite{BurIus},
\cite{Rock98}, \cite{Simons98}, \cite{Simons},
\cite{Zali02}, \cite{Zeidler2a}, and \cite{Zeidler2b}
for applications and further information. 
As usual, the domain and range of $A$ are denoted by
 $\dom A=\{x\in X:\ Ax\neq\varnothing\}$ and
 $\ran A=\bigcup_{x\in X}Ax$ respectively; $\dom f
 = \menge{x\in X}{f(x)<\pinf}$ stands for the domain
of a function $f:X\to\RX$.

\begin{fact}[Rockafellar]
\label{f:sumrule}
{\rm (See \cite{Rock70c} or \cite[Theorem~12.44]{Rock98}.)}
Let $A$ and $B$ be maximally monotone on $X$.
Suppose that $\reli\dom A \cap \reli\dom B\neq\emp$.
Then $A+B$ is maximally monotone.
\end{fact}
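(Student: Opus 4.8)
\emph{Proof proposal.} The plan is to deduce maximality from Minty's surjectivity criterion and to solve the resulting inclusion by regularizing $B$ and then passing to a limit; the constraint qualification $\reli\dom A\cap\reli\dom B\neq\varnothing$ enters only at that limiting step, which is the heart of the matter. That $A+B$ is monotone is immediate. For maximality I would use the criterion that \emph{a monotone operator $T$ on $X$ with $\ran(\Id+T)=X$ is maximally monotone}: if $(z,z^*)$ is monotonically related to $\gr T$, pick $x$ with $z+z^*\in x+Tx$; monotonicity of $T$ at $(x,z+z^*-x)$ and $(z,z^*)$ forces $\scal{x-z}{z-x}\ge 0$, so $x=z$ and $z^*\in Tz$. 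Hence it suffices to show that for every $w\in X$ the inclusion $w\in x+Ax+Bx$ has a solution.

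Fix $w\in X$ and, for $\lambda\in\RPP$, replace $B$ by its Yosida approximation $B_\lambda=\lambda^{-1}\big(\Id-(\Id+\lambda B)^{-1}\big)$, which is single-valued, monotone, $\lambda$-cocoercive, everywhere defined, and satisfies $B_\lambda x\in B\big((\Id+\lambda B)^{-1}x\big)$. A routine contraction argument --- writing $w\in x+Ax+B_\lambda x$ as a forward--backward fixed point and using the $\lambda$-cocoercivity of $B_\lambda$ together with the single-valuedness and nonexpansiveness of the resolvents of $A$ (Minty's theorem applied to the single operator $A$) --- shows that $\Id+A+B_\lambda$ is surjective. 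Thus there are $x_\lambda\in X$, $a_\lambda\in Ax_\lambda$ and $b_\lambda:=B_\lambda x_\lambda\in B\big((\Id+\lambda B)^{-1}x_\lambda\big)$ with $a_\lambda+b_\lambda=w-x_\lambda$ and $(\Id+\lambda B)^{-1}x_\lambda=x_\lambda-\lambda b_\lambda$.

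It remains to let $\lambda\downarrow 0$. Adding the monotonicity inequalities for $A$ at $(x_\lambda,a_\lambda)$ and for $B$ at $\big((\Id+\lambda B)^{-1}x_\lambda,b_\lambda\big)$ against a fixed pair $(\bar x,\bar x^*)\in\gr A$, $(\bar x,\bar y^*)\in\gr B$ and completing squares yields a bound of the shape $\|x_\lambda\|^2\le c_1\|x_\lambda\|+c_2+\lambda c_3$, so $(x_\lambda)$ is bounded for small $\lambda$ and, along a subsequence, $x_\lambda\to x$. The genuinely delicate step --- and the only place where $\reli\dom A\cap\reli\dom B\neq\varnothing$ is used --- is to upgrade this to boundedness of the dual sequences $(a_\lambda)$ and $(b_\lambda)$: the estimate controls only $\lambda\|b_\lambda\|^2$, so a priori $\|b_\lambda\|$ could grow like $\lambda^{-1/2}$. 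To preclude that, one chooses $\bar x$ in $\reli\dom A\cap\reli\dom B$ and tests the monotonicity inequalities against points of $\dom A$ and of $\dom B$ near $\bar x$ within their respective affine hulls, combining the $A$-side and $B$-side information; nonemptiness of the intersection of the two relative interiors is precisely what bounds the components of $a_\lambda$ and $b_\lambda$ transversal to $\aff\dom A$ and $\aff\dom B$ by already-controlled quantities. This is the main obstacle, and it is the finite-dimensional, relative-interior refinement of Rockafellar's interior-point constraint qualification. Granting it, pass to a further subsequence with $a_\lambda\to a$ and $b_\lambda\to b$; then $(\Id+\lambda B)^{-1}x_\lambda=x_\lambda-\lambda b_\lambda\to x$, closedness of $\gr A$ and $\gr B$ gives $a\in Ax$ and $b\in Bx$, and $a+b=w-x$. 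Hence $w\in x+Ax+Bx$, so $\ran(\Id+A+B)=X$ and $A+B$ is maximally monotone.
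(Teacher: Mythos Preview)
The paper does not prove this statement; it is recorded as a classical fact with references to \cite{Rock70c} and \cite[Theorem~12.44]{Rock98}, so there is no in-paper argument to compare against.

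Your outline follows the Yosida-regularization route, which is a legitimate strategy and different from Rockafellar's original argument in \cite{Rock70c} (which proceeds via normal cones and a minimax argument). The architecture is sound, Minty's criterion is applied correctly, and you rightly locate the crux at the boundedness of $(a_\lambda)$ and $(b_\lambda)$. But the proposal stops precisely there: you write ``Granting it'' and continue, and the preceding sketch is not a proof. In particular, merely testing monotonicity against points of $\dom A$ and $\dom B$ near $\bar x$ in the respective affine hulls does not by itself bound the individual sequences, because a maximally monotone operator need not be locally bounded on the relative interior of its domain: for a proper subspace $L\subsetneq X$, the operator $Ax=x+L^\perp$ for $x\in L$ (and $Ax=\varnothing$ otherwise) is maximally monotone with $\reli\dom A=L$, yet every value $Ax$ is an unbounded affine set. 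So the argument must genuinely exploit the coupling $a_\lambda+b_\lambda=w-x_\lambda$ together with the interaction of the normal directions to $\aff\dom A$ and $\aff\dom B$ that the constraint qualification provides; you describe this in words but do not carry it out, and that step is essentially the whole content of the theorem. As written, the proposal is a correct reduction to the hard lemma rather than a proof of it.
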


\begin{fact}[Minty]
\label{f:Minty61}
{\rm (See \cite{Minty61} or \cite[Theorem~12.41]{Rock98}.)}
Let $A\colon X\To X$ be maximally monotone.
Then $\dom A$ and $\ran A$ are nearly convex.
\end{fact}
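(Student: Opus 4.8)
The plan is to reduce everything to a statement about domains. Since $A$ is maximally monotone iff $A^{-1}$ is, and $\ran A=\dom A^{-1}$, it suffices to prove that $\dom A$ is nearly convex for an arbitrary maximally monotone $A$; and by Lemma~\ref{l:ruger}\ref{l:rugeriii} this reduces further to the single inclusion $\reli\conv\dom A\subseteq\dom A$ (note $\dom A\neq\varnothing$, since a maximally monotone operator has nonempty graph). I would then call on two classical theorems of Rockafellar: \textbf{(a)} $\overline{\dom A}$ is convex; and \textbf{(b)} the local boundedness theorem, i.e.\ a monotone operator on a Euclidean space is locally bounded at every point of the interior of the convex hull of its domain.

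For completeness I would recall why \textbf{(a)} holds. In this finite-dimensional setting Minty's theorem gives $\ran(\Id+\mu A)=X$ for every $\mu>0$, so the resolvent $J_{\mu A}:=(\Id+\mu A)^{-1}$ is an everywhere-defined, single-valued, nonexpansive self-map of $X$ with $J_{\mu A}x\in\dom A$ and $\mu^{-1}(x-J_{\mu A}x)\in A(J_{\mu A}x)$. For $z\in\dom A$ and $z^*\in Az$, monotonicity gives $\|J_{\mu A}z-z\|\le\mu\|z^*\|\to 0$ as $\mu\downarrow 0$, and nonexpansiveness then makes $(J_{\mu A}c)_\mu$ bounded for every fixed $c\in X$. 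Fixing $c=\sum_i\lambda_i a_i\in\conv\dom A$ with $a_i\in\dom A$ and $a_i^*\in Aa_i$, and writing $c_\mu^*=\mu^{-1}(c-J_{\mu A}c)\in A(J_{\mu A}c)$, a convex combination of the inequalities $\scal{J_{\mu A}c-a_i}{c_\mu^*-a_i^*}\ge 0$ yields
\[
-\frac{\|J_{\mu A}c-c\|^2}{\mu}=\scal{J_{\mu A}c-c}{c_\mu^*}\ge\sum_i\lambda_i\scal{J_{\mu A}c-a_i}{a_i^*}\ge -M
\]
with $M$ independent of small $\mu$, so $J_{\mu A}c\to c$ and hence $c\in\overline{\dom A}$; thus $\conv\dom A\subseteq\overline{\dom A}$, so $\overline{\dom A}$ is convex.

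The heart of the matter is the passage from \textbf{(b)} to $\reli\conv\dom A\subseteq\dom A$, via a projection trick. Translating by a point of $\dom A$, I may assume $V:=\aff\dom A$ is a linear subspace; let $P_V$ be the orthogonal projector onto $V$. Since $x-y\in V$ for all $x,y\in\dom A$, the ``tangential'' operator $\tilde A:=P_V\circ A$, with graph $\{(x,P_Vx^*):(x,x^*)\in\gr A\}$, is monotone when viewed as an operator on the Euclidean space $V$, with $\dom\tilde A=\dom A\subseteq V$ and $\aff\dom\tilde A=V$, so that $\reli\conv\dom A=\inte_V\conv\dom\tilde A$. Fix $x_0$ in this set. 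By \textbf{(b)} there are $\rho>0$ and $M\ge 0$ with $\|P_Vx^*\|\le M$ whenever $(x,x^*)\in\gr A$ and $\|x-x_0\|\le\rho$. By \textbf{(a)}, $x_0\in\conv\dom A\subseteq\overline{\dom A}$, so I may pick $x_n\to x_0$ with $x_n\in\dom A$ and $x_n^*\in Ax_n$; then $(P_Vx_n^*)$ is eventually bounded by $M$, and after passing to a subsequence $P_Vx_n^*\to w^*\in V$. For every $(x,x^*)\in\gr A$ we have $x_0-x\in V$, hence
\[
\scal{x_0-x}{w^*-x^*}=\scal{x_0-x}{w^*-P_Vx^*}=\lim_n\scal{x_n-x}{x_n^*-x^*}\ge 0,
\]
so $(x_0,w^*)$ is monotonically related to $\gr A$, and maximality of $A$ forces $(x_0,w^*)\in\gr A$, i.e.\ $x_0\in\dom A$. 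This proves $\reli\conv\dom A\subseteq\dom A$, hence the near convexity of $\dom A$ and, via $A^{-1}$, of $\ran A$.

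I expect the only genuinely substantial step to be \textbf{(a)}, the convexity of $\overline{\dom A}$---Rockafellar's theorem---which rests on Minty's surjectivity of $\Id+\mu A$ and the resolvent estimates above; one may of course simply cite it. Everything afterwards (the reduction to the full-dimensional case through $\tilde A$, the local boundedness theorem, and the closed-graph/maximality limiting argument) is routine.
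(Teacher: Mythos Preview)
The paper does not actually prove this statement: it is recorded as a \emph{Fact} with external citations to Minty \cite{Minty61} and Rockafellar--Wets \cite[Theorem~12.41]{Rock98}, and no argument is supplied. So there is no ``paper's own proof'' to compare against.

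That said, your proposal is a correct self-contained proof, and it lines up with the classical route one finds in those references. The reduction to $\reli\conv\dom A\subseteq\dom A$ via Lemma~\ref{l:ruger}\ref{l:rugeriii} is exactly right. Your resolvent argument for the convexity of $\overline{\dom A}$ is standard and cleanly executed (the key inequality $\|J_{\mu A}c-c\|^2\le M\mu$ is obtained correctly). The projection trick to pass to the affine hull $V$ and work with $\tilde A=P_V\circ A$ is the right move to handle the case $\inte\dom A=\varnothing$; monotonicity of $\tilde A$ on $V$ follows because $x-y\in V$ for $x,y\in\dom A$, and then Rockafellar's local boundedness theorem applies at points of $\inte_V\conv\dom\tilde A$. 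The final limiting step---extracting a weak (here, norm) cluster point $w^*$ of $(P_Vx_n^*)$ and verifying that $(x_0,w^*)$ is monotonically related to $\gr A$ using $x_0-x\in V$---is valid, and maximality closes the argument.

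One minor stylistic point: you invoke the local boundedness theorem for $\tilde A$ as a monotone operator on $V$, which is fine, but you could equally well note that $\tilde A$ extends to a maximally monotone operator on $V$ and quote the version for maximally monotone operators; either way the conclusion is the same. Nothing is missing.
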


\begin{remark}
Fact~\ref{f:Minty61} is optimal in the sense that
the domain or the range of a maximally monotone operator
may fail to be convex---even for a subdifferential operator---see,
e.g., \cite[page~555]{Rock98}.
\end{remark}

Sometimes 
quite precise information is available
on the range of the sum of two maximally monotone operators.
To formulate the corresponding statements, we need to
review a few notions.

\begin{definition}[Fitzpatrick function]
{\rm (See \cite{Fitz}, and also \cite{BS} or \cite{MLT}.)}
Let $A\colon X\To X$. Then the \emph{Fitzpatrick function}
associated with $A$ is
\begin{equation}
F_A\colon X\times X\to\RX\colon
(x,x^*)\mapsto
\sup_{(a,a^*)\in\gr A}
\big(\scal{x}{a^*}+\scal{a}{x^*}-\scal{a}{a^*}\big).
\end{equation}
\end{definition}

\begin{example}[energy]
{\rm (See, e.g., \cite[Example~3.10]{BMS}.)}
\label{ex:energy}
Let $\Id: X\to X\colon x\mapsto x$ be the 
\emph{identity operator}. 
Then $F_{\Id}\colon X\times X \to \RR\colon (x,x^*)\mapsto
\tfrac{1}{4}\|x+x^*\|^2$.
\end{example}

\begin{definition}[Br\'ezis-Haraux]
{\rm (See \cite{B-H}.)}
Let $A\colon X\to X$ be monotone.
Then $A$ is \emph{\reckti}\ (which is also known as star-monotone or $3^*$
monotone),
if
\begin{equation}
\dom A \times \ran A \subseteq \dom F_A.
\end{equation}
\end{definition}

\begin{remark}
If $A\colon X\To X$ is maximally monotone and \reckti,
then one obtains the ``rectangle''
$\overline{\dom F_A} = \overline{\dom A} \times \overline{\ran A}$, which
prompted Simons \cite{Simons06} to call such an operator rectangular.
\end{remark}

\begin{fact}
\label{f:reckti}
Let $A$ and $B$ be monotone on $X$,
let $C\colon X\to X$ be linear and monotone, let $\alpha>0$,
and let $f\colon X\to\RX$ be convex, lower semicontinuous, and
proper.
Then the following hold.
\begin{enumerate}
\item
\label{f:recktiinv}
$A$ is \reckti\
$\Leftrightarrow$
$A^{-1}$ is \reckti.
\item
\label{f:recktigamma}
$A$ is \reckti\ $\Leftrightarrow$
$\alpha A$ is \reckti.
\item
\label{f:recktisubdiff}
$\partial f$ is maximally monotone and \reckti.
\item
\label{f:recktilin}
$C$ is \reckti\
$\Leftrightarrow$
$C^*$ is \reckti\
$\Leftrightarrow$
$(\exists \gamma>0)(\forall x\in X)$ $\scal{x}{Cx}\geq \gamma\|Cx\|^2$.
\item
\label{f:recktisum}
$(\dom A\cap \dom B)\times X \subseteq \dom F_B$
$\Rightarrow$
$A+B$ is \reckti.
\end{enumerate}
\end{fact}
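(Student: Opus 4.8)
Throughout I would argue directly from the defining supremum of the Fitzpatrick function, using that $(x,x^*)\in\dom F_A$ means precisely that this supremum is finite at $(x,x^*)$; write $C_+:=\thalb(C+C^*)$ for the symmetric part of a linear operator. Parts \ref{f:recktiinv} and \ref{f:recktigamma} rest on the predictable behaviour of $F$ under inversion and positive scaling. Since $\gr A^{-1}=\menge{(a^*,a)}{(a,a^*)\in\gr A}$, reading off the definition gives $F_{A^{-1}}(x,x^*)=F_A(x^*,x)$, hence $(x,x^*)\in\dom F_{A^{-1}}\Leftrightarrow(x^*,x)\in\dom F_A$; combined with $\dom A^{-1}=\ran A$ and $\ran A^{-1}=\dom A$, the inclusion defining rectangularity of $A^{-1}$ is verbatim the one defining rectangularity of $A$. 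Likewise $\gr(\alpha A)=\menge{(a,\alpha a^*)}{(a,a^*)\in\gr A}$ yields $F_{\alpha A}(x,\alpha x^*)=\alpha F_A(x,x^*)$, and together with $\dom(\alpha A)=\dom A$ and $\ran(\alpha A)=\alpha\ran A$ the substitution $x^*\mapsto\alpha x^*$ turns the rectangularity inclusion for $\alpha A$ into that for $A$.

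For \ref{f:recktisubdiff}, maximal monotonicity of $\partial f$ is Rockafellar's classical theorem, so only rectangularity needs work. If $a^*\in\partial f(a)$, then the Fenchel equality $\scal{a}{a^*}=f(a)+f^*(a^*)$ holds, hence
\begin{align*}
\scal{x}{a^*}+\scal{a}{x^*}-\scal{a}{a^*}
&=\big(\scal{x}{a^*}-f^*(a^*)\big)+\big(\scal{a}{x^*}-f(a)\big)\\
&\le f(x)+f^*(x^*)
\end{align*}
by Fenchel--Young. Taking the supremum over $(a,a^*)\in\gr\partial f$ gives $F_{\partial f}(x,x^*)\le f(x)+f^*(x^*)$, and since $\dom\partial f\subseteq\dom f$ and $\ran\partial f=\dom\partial f^*\subseteq\dom f^*$, the right-hand side is finite on $\dom\partial f\times\ran\partial f$.

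For \ref{f:recktilin}, I would first compute the Fitzpatrick function of a linear monotone operator: from $\gr C=\menge{(a,Ca)}{a\in X}$ and $\scal{a}{Ca}=\scal{C_+a}{a}$ one gets $F_C(x,x^*)=\sup_{a\in X}\big(\scal{C^*x+x^*}{a}-\scal{C_+a}{a}\big)$, the conjugate of a nonnegative quadratic form, which is readily seen to be finite at $(x,x^*)$ exactly when $C^*x+x^*\in\ran C_+$. Since $\dom C=X$, setting $x=0$ shows that $C$ rectangular forces $\ran C\subseteq\ran C_+$; conversely this inclusion suffices, because a short argument from monotonicity gives $\ker C=\ker C^*$, whence $\ran C^*=\ran C\subseteq\ran C_+$ and so $C^*x+x^*\in\ran C_+$ for all admissible pairs. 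The condition ``$\ran C\subseteq\ran C_+$'' depends only on $C_+$ and on $\ran C=\ran C^*$, and $(C^*)_+=C_+$, so it is symmetric in $C$ and $C^*$; this gives the first equivalence. For the last one, note $\ran C\subseteq\ran C_+$ iff $\ker C_+\subseteq\ker C$: if $\scal{x}{Cx}\ge\gamma\|Cx\|^2$ and $C_+x=0$ then $\|Cx\|^2\le\gamma^{-1}\scal{C_+x}{x}=0$; conversely, decomposing $X=\ker C_+\oplus\ran C_+$, the form $\scal{C_+\cdot}{\cdot}$ dominates $\mu\|\cdot\|^2$ on $\ran C_+$ with $\mu>0$ the least positive eigenvalue of $C_+$, while $\|Cx\|^2\le\|C\|^2\|x\|^2$ and the $\ker C_+$-component is killed by both $C$ and $C_+$, so $\gamma:=\mu/\|C\|^2$ works.

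For \ref{f:recktisum}, the crux is the splitting estimate: for every $x\in\dom A$, every $x_A^*\in Ax$, and every $x^*\in X$, one has $F_{A+B}(x,x^*)\le\scal{x}{x_A^*}+F_B(x,x^*-x_A^*)$. To see it, given $(a,a^*)\in\gr(A+B)$, write $a^*=p+q$ with $p\in Aa$ and $q\in Ba$; monotonicity of $A$ between $(a,p)$ and $(x,x_A^*)$ gives $\scal{x}{p}-\scal{a}{p}\le\scal{x-a}{x_A^*}$, so
\begin{equation*}
\scal{x}{a^*}+\scal{a}{x^*}-\scal{a}{a^*}
\le\scal{x}{x_A^*}+\big(\scal{x}{q}+\scal{a}{x^*-x_A^*}-\scal{a}{q}\big),
\end{equation*}
and the parenthesized term is at most $F_B(x,x^*-x_A^*)$; taking the supremum over $\gr(A+B)$ proves the estimate. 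Now fix $x\in\dom(A+B)=\dom A\cap\dom B$ and pick any $x_A^*\in Ax$; for every $x^*\in X$ we have $(x,x^*-x_A^*)\in(\dom A\cap\dom B)\times X\subseteq\dom F_B$, so the right-hand side above is finite. Hence $(\dom A\cap\dom B)\times X\subseteq\dom F_{A+B}$, which in particular gives $\dom(A+B)\times\ran(A+B)\subseteq\dom F_{A+B}$, i.e.\ $A+B$ is \reckti. The only genuinely nonroutine steps are the linear-algebraic core of \ref{f:recktilin} (pinning down $\dom F_C$ and passing the range condition through $\ker C=\ker C^*$) and the discovery of the splitting estimate in \ref{f:recktisum}, where the monotonicity of $A$ and the hypothesis on $F_B$ are knit together; everything else is bookkeeping with the definition of $F$.
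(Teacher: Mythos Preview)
Your argument is correct in every part. The paper itself does not prove this fact: it says that \ref{f:recktiinv} and \ref{f:recktigamma} ``follow readily from the definitions'' (which is exactly what you do), cites \cite[Example~1]{B-H} and Moreau/Rockafellar for \ref{f:recktisubdiff}, cites \cite[Proposition~2 and Remarque~2]{B-H} for \ref{f:recktilin}, and cites \cite[Proposition~24.17]{BC} for \ref{f:recktisum}. So your write-up is a genuine, self-contained proof where the paper only gives pointers to the literature.

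A few minor remarks. Your splitting estimate in \ref{f:recktisum},
\[
F_{A+B}(x,x^*)\le \scal{x}{x_A^*}+F_B(x,x^*-x_A^*)\quad\text{for }(x,x_A^*)\in\gr A,
\]
is in fact the paper's Proposition~\ref{p:BMS} specialized to the decomposition $x^*=x_A^*+(x^*-x_A^*)$, since for monotone $A$ one has $F_A(x,x_A^*)=\scal{x}{x_A^*}$ on $\gr A$; so your route and the paper's infimal-convolution route are the same idea in slightly different clothing. In \ref{f:recktilin} your formula $\gamma=\mu/\|C\|^2$ tacitly assumes $C\neq 0$; the case $C=0$ is trivial (both sides of the inequality vanish and rectangularity is immediate), so this is harmless, but you may want to say ``without loss of generality $C\neq 0$'' before dividing.
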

\begin{proof}
\ref{f:recktiinv}\&\ref{f:recktigamma}:
This follows readily from the definitions.
\ref{f:recktisubdiff}: The fact that $\partial f$
is \reckti\ was pointed out in \cite[Example~1 on page~166]{B-H}.
For maximal monotonicity of $\partial f$,
see \cite{Moreau65} (or \cite[Corollary~31.5.2]{Rock70}
or \cite[Theorem~12.17]{Rock98}).
\ref{f:recktilin}: \cite[Proposition~2 and Remarque~2 on
page~169]{B-H}.
\ref{f:recktisum}: \cite[Proposition~24.17]{BC}.
\end{proof}

\begin{example}
{\rm (See also \cite[Example~3 on page~167]{B-H} or
\cite[Example~6.5.2(iii)]{AusTeb}.)}
\label{ex:resolvents}
Let $A\colon X\To X$ be maximally monotone.
Then $A+\Id$ and $(A+\Id)^{-1}$  are maximally monotone
and \reckti.
\end{example}
\begin{proof}
Combining Fact~\ref{f:reckti}\ref{f:recktisum}
and Example~\ref{ex:energy}, we see that
$A+\Id$ is \reckti.
Furthermore, $A+\Id$ is maximally monotone
by Fact~\ref{f:sumrule}.
Using Fact~\ref{f:reckti}\ref{f:recktiinv},
we see that $ (\Id+A)^{-1}$ is maximally monotone and \reckti.
\end{proof}

\begin{proposition}
\label{p:BMS}
{\rm (See \cite[Proposition~4.2]{BMS}.)}
Let $A$ and $B$ be monotone on $X$, and let $(x,x^*)\in X\times X$.
Then
$F_{A+B}(x,x^*) \leq \big(F_A(x,\cdot)\infconv F_B(x,\cdot)\big)(x^*)$.
\end{proposition}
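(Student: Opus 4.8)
The plan is to unfold both sides directly from their definitions and compare. Recall that, for the partial functions $F_A(x,\cdot)$ and $F_B(x,\cdot)$, the infimal convolution evaluated at $x^*$ is $\big(F_A(x,\cdot)\infconv F_B(x,\cdot)\big)(x^*)=\inf_{y^*\in X}\big(F_A(x,y^*)+F_B(x,x^*-y^*)\big)$. Hence it suffices to fix an arbitrary decomposition $x^*=y^*+z^*$ and establish
\begin{equation*}
F_{A+B}(x,x^*)\le F_A(x,y^*)+F_B(x,z^*),
\end{equation*}
since taking the infimum over all such decompositions then yields the assertion. Two degenerate cases are immediate: if $\gr(A+B)=\emp$ then $F_{A+B}\equiv\minf$ and there is nothing to prove; and if either $F_A(x,y^*)=\pinf$ or $F_B(x,z^*)=\pinf$, the right-hand side is $\pinf$ and the inequality is trivial.

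For the core estimate, let $(a,a^*)\in\gr(A+B)$ be arbitrary. By definition of the sum of set-valued operators, there exist $b^*\in Aa$ and $c^*\in Ba$ with $a^*=b^*+c^*$; thus $(a,b^*)\in\gr A$ and $(a,c^*)\in\gr B$. From the definition of the Fitzpatrick function,
\begin{equation*}
F_A(x,y^*)\ge \scal{x}{b^*}+\scal{a}{y^*}-\scal{a}{b^*}
\quad\text{and}\quad
F_B(x,z^*)\ge \scal{x}{c^*}+\scal{a}{z^*}-\scal{a}{c^*}.
\end{equation*}
Adding these inequalities and using $b^*+c^*=a^*$ together with $y^*+z^*=x^*$ gives
\begin{equation*}
F_A(x,y^*)+F_B(x,z^*)\ge \scal{x}{a^*}+\scal{a}{x^*}-\scal{a}{a^*}.
\end{equation*}
Taking the supremum over $(a,a^*)\in\gr(A+B)$ on the right produces $F_A(x,y^*)+F_B(x,z^*)\ge F_{A+B}(x,x^*)$, and then taking the infimum over all decompositions $x^*=y^*+z^*$ completes the argument.

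There is essentially no obstacle: the argument uses only the definitions (in fact monotonicity is not even needed), and the only point requiring care is the bookkeeping—one must introduce the decomposition $x^*=y^*+z^*$ first, match it with the decomposition $a^*=b^*+c^*$ coming from $a^*\in Aa+Ba$, and only then sum the two Fitzpatrick inequalities.
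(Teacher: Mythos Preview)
Your argument is correct; it is the natural unfolding of both sides from the definitions, and indeed monotonicity is never used. Note, however, that the present paper does not supply its own proof of this proposition: it is stated with a citation to \cite[Proposition~4.2]{BMS} and no proof environment follows. So there is no ``paper's proof'' to compare against here. Your write-up is essentially the standard proof one would expect in the cited source: fix a splitting $x^*=y^*+z^*$, decompose any $(a,a^*)\in\gr(A+B)$ via $a^*=b^*+c^*$ with $b^*\in Aa$ and $c^*\in Ba$, bound each Fitzpatrick function from below by the corresponding coupling term, add, take the supremum over $\gr(A+B)$, and finally infimize over the splittings.
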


\begin{lemma}
\label{l:claude}
Let $A$ and $B$ be \reckti\ on $X$.
Then $A+B$ is \reckti.
\end{lemma}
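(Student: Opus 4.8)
The plan is to show that $\dom A\times\ran A\subseteq\dom F_{A+B}$ by combining the rectangularity hypotheses on $A$ and $B$ with the infimal-convolution estimate for Fitzpatrick functions recorded in Proposition~\ref{p:BMS}. First I would fix an arbitrary pair $(x,x^*)\in\dom(A+B)\times\ran(A+B)$ and observe that $\dom(A+B)=\dom A\cap\dom B$, so $x\in\dom A$ and $x\in\dom B$, and that $\ran(A+B)\subseteq\ran A+\ran B$, so $x^*=u^*+v^*$ for some $u^*\in\ran A$ and $v^*\in\ran B$. The aim is then to produce a finite upper bound for $F_{A+B}(x,x^*)$.

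Next I would invoke Proposition~\ref{p:BMS}, which gives
\begin{equation}
F_{A+B}(x,x^*)\leq\big(F_A(x,\cdot)\infconv F_B(x,\cdot)\big)(x^*)
\leq F_A(x,u^*)+F_B(x,v^*),
\end{equation}
the last step being the trivial pointwise bound on an infimal convolution evaluated at $x^*=u^*+v^*$. Since $A$ is rectangular and $(x,u^*)\in\dom A\times\ran A\subseteq\dom F_A$, we have $F_A(x,u^*)<\pinf$; similarly, since $B$ is rectangular and $(x,v^*)\in\dom B\times\ran B\subseteq\dom F_B$, we have $F_B(x,v^*)<\pinf$. Hence the right-hand side is finite, so $F_{A+B}(x,x^*)<\pinf$, i.e. $(x,x^*)\in\dom F_{A+B}$. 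As $(x,x^*)$ was arbitrary, $\dom(A+B)\times\ran(A+B)\subseteq\dom F_{A+B}$, which is exactly the statement that $A+B$ is \reckti.

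The one point that requires a little care—and which I expect to be the only real obstacle—is justifying $\ran(A+B)\subseteq\ran A+\ran B$ together with $\dom(A+B)=\dom A\cap\dom B$; both are immediate from the definition $(A+B)x=Ax+Bx$ for $x\in\dom A\cap\dom B$ (and $\varnothing$ otherwise), so no monotonicity or maximality is even needed here. Note also that the argument does not use maximal monotonicity of $A$, $B$, or $A+B$, only monotonicity (so that $F_A$, $F_B$, $F_{A+B}$ are the relevant Fitzpatrick functions and Proposition~\ref{p:BMS} applies); thus the lemma holds in the stated generality with no extra hypotheses.
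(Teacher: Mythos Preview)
Your argument is correct and essentially identical to the paper's: both decompose $x^*\in\ran(A+B)\subseteq\ran A+\ran B$, apply Proposition~\ref{p:BMS} to bound $F_{A+B}(x,x^*)$ by $F_A(x,u^*)+F_B(x,v^*)$, and use rectangularity of $A$ and $B$ to conclude finiteness. The only slip is in your opening sentence, where you write ``$\dom A\times\ran A\subseteq\dom F_{A+B}$'' but clearly mean ``$\dom(A+B)\times\ran(A+B)\subseteq\dom F_{A+B}$''; the body of the proof handles this correctly.
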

\begin{proof}
Clearly, $\dom(A+B) = (\dom A)\cap(\dom B)$,
and $\ran(A+B) \subseteq \ran A + \ran B$.
Take $x\in\dom(A+B)$ and $y^*\in\ran(A+B)$.
Then there exist $a^*\in\ran A$ and $b^*\in\ran B$
such that $a^*+b^*=y^*$.
Furthermore, $(x,a^*)\in(\dom A)\times(\ran A)\subseteq\dom F_A$ and
$(x,b^*)\in(\dom B)\times(\ran B)\subseteq\dom F_A$.
Using Proposition~\ref{p:BMS} and the
assumption that $A$ and $B$ are \reckti, we obtain
\begin{equation}
F_{A+B}(x,y^*) \leq F_A(x,a^*) + F_B(x,b^*) < \pinf.
\end{equation}
Therefore, $\dom(A+B)\times\ran(A+B)\subseteq\dom F_{A+B}$
and $A+B$ is \reckti.
\end{proof}

We are now ready to state the range result, which can be traced back to the
seminal paper by Br\'ezis and Haraux \cite{B-H}
(see also \cite{Simons98} or \cite{Simons06},
and \cite{Reich} for a Banach space version).
The useful finite-dimensional formulation we record here
was brought to light by Auslender and Teboulle~\cite{AusTeb}.

\begin{fact}[Br\'ezis-Haraux]
\label{f:B-H}
{\rm (See \cite[Theorem~6.5.1(b) and Theorem~6.5.2]{AusTeb}.)}
Let $A$ and $B$ be monotone on $X$ such that $A+B$ is maximally monotone.
Suppose that one of the following holds.
\begin{enumerate}
\item
$A$ and $B$ are \reckti.
\item
$\dom A\subseteq \dom B$ and $B$ is \reckti.
\end{enumerate}
Then
$\overline{\ran(A+B)}=\overline{\ran A+\ran B}$,
$\inte(\ran (A+B))=\inte(\ran A+\ran B)$, and 
$\reli\conv(\ran A + \ran B)\subseteq \ran(A+B)$. 
\end{fact}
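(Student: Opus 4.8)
The plan is to deduce the triple conclusion from the classical Brézis--Haraux inclusion $\cran(A+B) = \cran(\ran A + \ran B)$ by feeding it into the near-convexity machinery developed in Section~\ref{sec:near}. First I would record that, under either hypothesis, $A+B$ is \reckti: in case~(i) this is exactly Lemma~\ref{l:claude}, and in case~(ii) I would invoke Fact~\ref{f:reckti}\ref{f:recktisum} after observing that $\dom A \subseteq \dom B$ forces $\dom(A+B) = \dom A \cap \dom B = \dom A$, so that $(\dom(A+B)\cap\dom B)\times X = \dom A \times X$, and then check that the rectangularity of $B$ upgrades to $\dom A \times X \subseteq \dom F_B$ using $\cran\dom F_B \supseteq \overline{\dom B} \times \overline{\ran B}$ together with $\dom A\subseteq\dom B$ --- this is the one place where a small argument is genuinely needed. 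With $A+B$ maximally monotone and \reckti, the original Brézis--Haraux theorem (in the Auslender--Teboulle finite-dimensional form being cited) gives $\overline{\ran(A+B)} = \overline{\ran A + \ran B}$.

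Next I would turn the closure equality into the full near-equality $\ran(A+B) \approx \ran A + \ran B$. The key observation is that both sides are nearly convex: by Fact~\ref{f:Minty61}, $\ran(A+B)$, $\ran A$, and $\ran B$ are all nearly convex since $A+B$, $A$, and $B$ are maximally monotone, and then Theorem~\ref{t:ruger} (or Lemma~\ref{t:split} with all $\lambda_i = 1$) shows $\ran A + \ran B$ is nearly convex as well. Two nearly convex sets with the same closure are nearly equal --- this is precisely the implication \ref{char:ii}$\Rightarrow$\ref{char:i} of Proposition~\ref{p:char:near}. Hence $\ran(A+B) \approx \ran A + \ran B$.

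Finally, I would unpack what $\approx$ delivers. By definition of near equality, $\reli\ran(A+B) = \reli(\ran A + \ran B)$, and by \eqref{e:mike} this yields $\inte\ran(A+B) = \inte(\ran A + \ran B)$, which is the second assertion. For the third, apply Lemma~\ref{l:nearconset} to the nearly convex set $\ran A + \ran B$: it gives $\reli\conv(\ran A + \ran B) = \reli(\ran A + \ran B) = \reli\ran(A+B) \subseteq \ran(A+B)$, where the last inclusion is the general fact that $\reli C \subseteq C$ for any set (here with $C = \ran(A+B)$, whose relative interior equals that of its convex hull by near convexity). This is exactly $\reli\conv(\ran A + \ran B) \subseteq \ran(A+B)$, completing the proof.

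The main obstacle I anticipate is the bookkeeping in case~(ii): one must be careful that rectangularity of $B$ ``in the $\ran B$ direction'' genuinely extends to all of $X$ in the second coordinate when we only know $\dom A \subseteq \dom B$, and that $A+B$ inherits rectangularity with its own (possibly smaller) range. Everything after the classical closure statement is then a routine, if slightly lengthy, translation through Proposition~\ref{p:char:near} and Lemma~\ref{l:nearconset}; the conceptual content is entirely in recognizing that ``nearly convex $+$ equal closures $\Rightarrow$ nearly equal,'' which the Section~\ref{sec:near} results were built to supply.
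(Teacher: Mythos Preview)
The paper does not prove this statement: Fact~\ref{f:B-H} is cited from \cite{AusTeb} without proof, so there is no ``paper's own proof'' to compare against. Your proposal is an attempt to supply a proof the authors chose to import.

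That said, your argument has a genuine gap. You invoke Fact~\ref{f:Minty61} to conclude that $\ran A$ and $\ran B$ are nearly convex, asserting that ``$A+B$, $A$, and $B$ are maximally monotone.'' But the hypothesis of Fact~\ref{f:B-H} assumes only that $A$ and $B$ are \emph{monotone}; maximal monotonicity is imposed on the sum $A+B$ alone. Without maximal monotonicity of the individual operators, Minty's theorem does not apply, and $\ran A$, $\ran B$ need not be nearly convex. For a concrete obstruction in case~(ii): take $X=\RR$, let $A$ have $\dom A=\{0\}$ with $A(0)=\{-1,1\}$, and let $B=N_{\{0\}}=\partial\iota_{\{0\}}$. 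Then $B$ is \reckti\ by Fact~\ref{f:reckti}\ref{f:recktisubdiff}, $\dom A\subseteq\dom B$, and $A+B=N_{\{0\}}$ is maximally monotone, yet $\ran A=\{-1,1\}$ is not nearly convex. Your subsequent appeal to Lemma~\ref{t:split}/Theorem~\ref{t:ruger} for the near convexity of $\ran A+\ran B$, and hence to Proposition~\ref{p:char:near}, is therefore unjustified.

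There is also a structural issue. You propose to obtain the closure equality from ``the original Br\'ezis--Haraux theorem (in the Auslender--Teboulle finite-dimensional form being cited),'' but that cited form \emph{is} Fact~\ref{f:B-H} in its entirety; all three conclusions appear together in \cite{AusTeb}. If you mean to invoke only the classical Hilbert-space version (closure and interior equalities, i.e., the relation $\simeq$ of Remark~\ref{r:mike}) and then upgrade, you should cite that separately---and even then the upgrade still founders on the gap above. Note finally that the paper's Theorem~\ref{c:claude} does essentially what you are attempting, but in the opposite direction: it \emph{uses} Fact~\ref{f:B-H} (specifically the inclusion $\reli\conv(\ran A+\ran B)\subseteq\ran(A+B)$) together with Proposition~\ref{p:squeeze} to deduce the near equality, rather than deriving Fact~\ref{f:B-H} from near-convexity considerations.
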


Item~\ref{c:claudei} of the following result
also follows from Chu's \cite[Theorem~3.1]{Chu}.

\begin{theorem}
\label{c:claude}
Let $A$ and $B$ be monotone on $X$ such that
$A+B$ is maximally monotone. 
Suppose that one of the following holds.
\begin{enumerate}
\item
\label{c:claudei}
$A$ and $B$ are \reckti.
\item
\label{c:claudeii}
$\dom A\subseteq \dom B$ and $B$ is \reckti.
\end{enumerate}
Then $\ran(A+B)$ is nearly convex,
and $\ran(A+B)\approx \ran A+ \ran B$.
\end{theorem}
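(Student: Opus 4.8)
The plan is to reduce everything to the Br\'ezis--Haraux range theorem (Fact~\ref{f:B-H}) together with the squeeze theorem (Proposition~\ref{p:squeeze}), so that the single near-equality statement ends up packaging all three conclusions of Fact~\ref{f:B-H} at once.

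First I would dispose of the near convexity claim outright: since $A+B$ is maximally monotone, Fact~\ref{f:Minty61} gives that $\ran(A+B)$ is nearly convex, and hence, by Lemma~\ref{l:nearconset}, $\ran(A+B)\approx\overline{\ran(A+B)}$. Next, under either hypothesis \ref{c:claudei} or \ref{c:claudeii} the assumptions of Fact~\ref{f:B-H} hold verbatim, so $\overline{\ran(A+B)}=\overline{\ran A+\ran B}$. Combining this with the trivial inclusion $\ran(A+B)\subseteq\ran A+\ran B$ --- which holds because $(A+B)x\subseteq Ax+Bx\subseteq\ran A+\ran B$ for every $x\in X$ --- I obtain the sandwich $\ran(A+B)\subseteq\ran A+\ran B\subseteq\overline{\ran A+\ran B}=\overline{\ran(A+B)}$. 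Since the two outer sets are nearly equal, Proposition~\ref{p:squeeze} yields $\ran(A+B)\approx\ran A+\ran B$, completing the proof. As an aside, Lemma~\ref{l:ruger} then shows $\ran A+\ran B$ is itself nearly convex, and the closure, interior, and relative-interior conclusions of Fact~\ref{f:B-H} are all recovered from this one near-equality.

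The only point requiring any care --- and the reason the squeeze theorem is the right instrument --- is that we are not entitled to assume that $\ran A+\ran B$ is nearly convex in advance, since neither $A$ nor $B$ is hypothesized to be maximally monotone, only their sum. Proposition~\ref{p:squeeze} circumvents this by transferring the closure and relative-interior data from $\ran(A+B)$ to the sandwiched set for free, so no separate argument for the near convexity of $\ran A+\ran B$ is needed and the whole proof rests on the closure identity supplied by Br\'ezis--Haraux.
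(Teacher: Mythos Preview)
Your proof is correct and follows the same strategy as the paper's: Fact~\ref{f:Minty61} for near convexity of $\ran(A+B)$, then a sandwich derived from Fact~\ref{f:B-H} and closed off by Proposition~\ref{p:squeeze} (together with Lemma~\ref{l:nearconset}). The only difference is which conclusion of Fact~\ref{f:B-H} drives the sandwich---the paper uses the inclusion $\reli\conv(\ran A+\ran B)\subseteq\ran(A+B)$ to squeeze between $\reli\conv(\ran A+\ran B)$ and its closure, whereas you use only the closure identity to squeeze between $\ran(A+B)$ and $\overline{\ran(A+B)}$; these are equivalent rearrangements of the same argument.
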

\begin{proof}
The near convexity of $\ran(A+B)$ follows from Fact~\ref{f:Minty61}.
Using Fact~\ref{f:B-H} and Fact~\ref{f:ri}\ref{f:ri:i},
\begin{subequations}
\begin{align}
\reli\conv(\ran A+ \ran B) &\subseteq \ran(A+B) \subseteq
\ran A + \ran B \subseteq \cconv(\ran A + \ran B)\\
&= \overline{\reli\conv(\ran A + \ran B)}.
\end{align}
\end{subequations}
Proposition~\ref{p:squeeze} and Lemma~\ref{l:nearconset} imply
$\ran(A+B)\approx\ran A +\ran B\approx \reli\conv(\ran A+ \ran B)$. 
\end{proof}

\begin{remark}
Considering $A+0$, where $A$ is the rotator by $\pi/2$ on $\RR^2$
which is not \reckti, we see that
$A+B$ need not be rectangular under assumption
\ref{c:claudeii} in Theorem~\ref{c:claude}.
\end{remark}

If we let $S_i=\ran A_i$ and $\lambda_i=1$ 
for every $i\in I$ in Theorem~\ref{c:punch}, 
then we obtain a result that is related to Pennanen's
\cite[Corollary~6]{Teemu}.

\begin{theorem}
\label{c:punch}
Let $(A_i)_{i\in I}$ be a family of maximally monotone
\reckti\ operators on $X$
with $\bigcap_{i\in I}\reli\dom A_i \neq\emp$, 
let $(S_i)_{i\in I}$ be a family of subsets of $X$ such that
\begin{equation}
(\forall i\in I)\quad
S_i\in\big\{\ran A_{i}, 
\overline{\ran A_{i}}, \reli (\ran A_{i}), \reli(\conv\ran
A_{i})\big\},
\end{equation}
and let $(\lambda_i)_{i\in I}$ 
be a family of strictly positive real numbers.
Then
$\sum_{i\in I}\lambda_iA_i$ is maximally monotone, 
\reckti, and 
$\ran\sum_{i\in I}\lambda_iA_i\approx
\sum_{i\in I}\lambda_iS_{i}$ is nearly convex. 
\end{theorem}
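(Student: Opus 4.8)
The plan is to prove Theorem~\ref{c:punch} by induction on $m = |I|$, reducing the general sum to the two-operator situation already handled in Theorem~\ref{c:claude} and Lemma~\ref{l:claude}, while carrying along the ``domain overlap'' hypothesis needed to invoke Rockafellar's sum rule (Fact~\ref{f:sumrule}). For $m=1$ there is nothing to prove beyond the observation that $\ran A_1 \approx S_1$ by Lemma~\ref{l:nearconset}, since $\ran A_1$ is nearly convex by Fact~\ref{f:Minty61} and each admissible choice of $S_1$ lies in the near-equivalence class of $\ran A_1$. For the inductive step, I would split $\sum_{i\in I}\lambda_i A_i = \big(\sum_{i\in I\smallsetminus\{m\}}\lambda_i A_i\big) + \lambda_m A_m$ and apply the two-operator results to $A := \sum_{i\in I\smallsetminus\{m\}}\lambda_i A_i$ and $B := \lambda_m A_m$.

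The first key step is to verify the hypotheses needed at each stage. By Fact~\ref{f:reckti}\ref{f:recktigamma}, each $\lambda_i A_i$ is \reckti; by the induction hypothesis $A$ is maximally monotone and \reckti, and $B = \lambda_m A_m$ is maximally monotone and \reckti. For Fact~\ref{f:sumrule} I need $\reli\dom A \cap \reli\dom B \neq \emp$. Here I would use that $\dom(\lambda_i A_i) = \dom A_i$ (for $\lambda_i \neq 0$), that $\dom A_i$ is nearly convex (Fact~\ref{f:Minty61}), and that by Lemma~\ref{t:split} applied to the family $(\dom A_i)_{i \in I\smallsetminus\{m\}}$ we get $\dom A \supseteq \sum_{i \in I\smallsetminus\{m\}}\lambda_i\dom A_i$ with $\reli\dom A \supseteq \sum_{i\in I\smallsetminus\{m\}}\lambda_i\reli\dom A_i$ — in fact I should prove the sharper induction hypothesis that $\overline{\dom A} = \overline{\sum \lambda_i\dom A_i}$ and $\reli\dom A = \sum\lambda_i\reli\dom A_i$, which follows from Fact~\ref{f:B-H} / Theorem~\ref{c:claude}-style reasoning applied to domains, or more directly from the Brézis–Haraux theorem on domains. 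Then $\bigcap_{i\in I}\reli\dom A_i \neq \emp$ forces $\reli\dom A \cap \reli\dom B \neq \emp$. So $\sum_{i\in I}\lambda_i A_i = A + B$ is maximally monotone by Fact~\ref{f:sumrule}, and it is \reckti\ by Lemma~\ref{l:claude}.

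The second key step is the range identity. Theorem~\ref{c:claude}\ref{c:claudei} applied to $A$ and $B$ gives $\ran(A+B) \approx \ran A + \ran B$. By the induction hypothesis $\ran A \approx \sum_{i\in I\smallsetminus\{m\}}\lambda_i(\ran A_i)$, and trivially $\ran B = \lambda_m\ran A_m$. Since each $\ran A_i$ is nearly convex, Theorem~\ref{t:ruger} (applied with the family consisting of $\ran A$, which is nearly convex, together with $\ran B$) yields $\ran A + \ran B \approx \sum_{i\in I\smallsetminus\{m\}}\lambda_i\ran A_i + \lambda_m\ran A_m = \sum_{i\in I}\lambda_i\ran A_i$. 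Finally, since each $S_i$ lies in $\{\ran A_i, \overline{\ran A_i}, \reli\ran A_i, \reli\conv\ran A_i\}$, Theorem~\ref{t:equivalent} gives $\sum_{i\in I}\lambda_i\ran A_i \approx \sum_{i\in I}\lambda_i S_i$; transitivity of $\approx$ then delivers $\ran\sum_{i\in I}\lambda_i A_i \approx \sum_{i\in I}\lambda_i S_i$. Near convexity of the right-hand side is immediate from Lemma~\ref{t:split} (or Fact~\ref{f:Minty61} applied to the maximally monotone operator $\sum\lambda_i A_i$ together with $\approx$).

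The main obstacle I anticipate is the bookkeeping in the inductive step: I must strengthen the statement carried through the induction so that it also records the domain information ($\reli\dom(\sum_{i\in I'}\lambda_i A_i) = \sum_{i\in I'}\lambda_i\reli\dom A_i$ for each subfamily $I'$), because without that I cannot check the hypothesis $\reli\dom A \cap \reli\dom B \neq \emp$ of Fact~\ref{f:sumrule} at the next stage. Establishing that domain identity is itself a mini-application of the Brézis–Haraux machinery (or of the near-convexity results of Section~\ref{sec:near} combined with the fact that rectangular operators have $\overline{\dom F_A}$ a full rectangle), so I would either cite the relevant domain statement or insert a short lemma. Everything else — rectangularity, maximal monotonicity, the range near-equality — then chains together routinely from Lemma~\ref{l:claude}, Theorem~\ref{c:claude}, Theorem~\ref{t:ruger}, and Theorem~\ref{t:equivalent}.
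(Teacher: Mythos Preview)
Your overall plan---induction on $|I|$, invoking Fact~\ref{f:sumrule} and Lemma~\ref{l:claude} at each step for maximal monotonicity and rectangularity, then Theorem~\ref{c:claude}\ref{c:claudei} together with Theorem~\ref{t:ruger} for the range near-equality, and finally Theorem~\ref{t:equivalent} to pass from $\ran A_i$ to $S_i$---is exactly the route the paper takes. The paper's own proof is a terse two-sentence summary of precisely this induction.

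There is, however, a genuine slip in your domain bookkeeping. You write that ``by Lemma~\ref{t:split} \ldots\ we get $\dom A \supseteq \sum_{i\in I\smallsetminus\{m\}}\lambda_i\dom A_i$'' and propose to carry through the induction the identity $\reli\dom A = \sum_i \lambda_i\,\reli\dom A_i$. This is wrong: the domain of a sum of set-valued operators is the \emph{intersection} of their domains, $\dom\big(\sum_{i}\lambda_i A_i\big) = \bigcap_i \dom A_i$, not a Minkowski sum. (Take $\dom A_1=\{0\}$, $\dom A_2=\{1\}$ to see the claimed inclusion fail.) Lemma~\ref{t:split} and the Br\'ezis--Haraux machinery are about sums of sets and ranges; they say nothing about intersections.

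The correct bookkeeping is simpler than what you propose. Strengthen the induction hypothesis to include
\[
\bigcap_{i\in I}\reli\dom A_i \;\subseteq\; \reli\dom\Big(\sum_{i\leq k}\lambda_i A_i\Big).
\]
At the inductive step, both $\dom B_k := \dom\sum_{i\leq k}\lambda_i A_i$ and $\dom A_{k+1}$ are nearly convex (Fact~\ref{f:Minty61}), and their relative interiors meet by the induction hypothesis; hence $\reli\dom B_k \cap \reli\dom A_{k+1}\neq\emp$, so Fact~\ref{f:sumrule} applies. Moreover, for nearly convex sets $D,E$ with $\reli D\cap\reli E\neq\emp$ one has $\reli(D\cap E)=\reli D\cap\reli E$ (pass to the convex closures and use \cite[Theorem~6.5]{Rock70}), which propagates the displayed inclusion to stage $k+1$. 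With this fix in place, the rest of your argument goes through unchanged and coincides with the paper's.
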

\begin{proof}
To see that $\sum_{i\in I}\lambda_iA_i$ is maximally monotone and
\reckti, use Fact~\ref{f:sumrule}, Lemma~\ref{l:claude}, and
induction. 
With Theorem~\ref{t:ruger}, 
Fact~\ref{f:sumrule} and Lemma~\ref{l:claude} in mind, 
Theorem~\ref{c:claude}\ref{c:claudei}
and induction yields
$\ran\sum_{i\in I}\lambda_iA_i\approx
\sum_{i\in I}\lambda_i\ran A_{i}$ and the near convexity. 
Finally, as $\ran A_{i}$ is nearly convex for every $i\in I$
by Fact~\ref{f:Minty61},
$\ran\sum_{i\in I}\lambda_iA_i\approx
\sum_{i\in I}\lambda_iS_{i}$ 
 follows from Theorem~\ref{t:equivalent}.
\end{proof}

The main result of this section is the following.

\begin{theorem}
\label{c:berlin}
Let $(A_i)_{i\in I}$ be a family of maximally monotone
\reckti\ operators on $X$ 
such that $\bigcap_{i\in I}\reli\dom A_i \neq\emp$, 
let $(\lambda_i)_{i\in I}$ be a family of strictly positive real
numbers, and let $j\in I$. 
Set
\begin{equation}
A = \sum_{i\in I} \lambda_i A_i.
\end{equation}
Then the following hold.
\begin{enumerate}
\item
\label{c:berlini}
If $\sum_{i\in I}\lambda_i\ran A_i = X$, then $\ran A = X$.
\item
\label{c:berlini+}
If $A_j$ is surjective, then $A$ is surjective.
\item
\label{c:berlinlong}
If $0\in \bigcap_{i\in I}\overline{\ran A_{i}}$, then
$0\in\overline{\ran A}$.
\item
\label{c:berlinii}
If 
$0\in(\inte\ran A_j)\cap \bigcap_{i\in I\smallsetminus\{j\}}
\overline{\ran A_i}$, then $0\in\inte\ran A$.
\end{enumerate}
\end{theorem}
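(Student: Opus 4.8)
The plan is to apply the machinery built up in Sections~\ref{sec:near} and~\ref{sec:mon} to the set $\ran A$, whose key structural fact is the near equality $\ran A \approx \sum_{i\in I}\lambda_i\ran A_i$ supplied by Theorem~\ref{c:punch} (with $S_i = \ran A_i$); this is the workhorse for all four items. First I would record, once and for all, that the hypotheses of Theorem~\ref{c:punch} are met, so that $A$ is maximally monotone and rectangular, $\ran A$ is nearly convex, and $\ran A \approx \sum_{i\in I}\lambda_i\ran A_i$. Since all the $\ran A_i$ are nearly convex by Fact~\ref{f:Minty61}, so is $\sum_{i\in I}\lambda_i\ran A_i$ by Lemma~\ref{t:split}, and near equality transfers $\overline{\phantom{x}}$, $\reli$, and (via \eqref{e:mike}) $\inte$ between the two sets; this is the bridge used repeatedly below.

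For \ref{c:berlini}: if $\sum_{i\in I}\lambda_i\ran A_i = X$, then $\overline{\ran A} = \overline{\sum_{i\in I}\lambda_i\ran A_i} = X$ and $\inte\ran A = \inte\sum_{i\in I}\lambda_i\ran A_i = X$, hence $\ran A = X$. For \ref{c:berlini+}: surjectivity of $A_j$ means $\ran A_j = X$; then Corollary~\ref{c:sprink}\ref{sprink2}, applied to the nearly convex sets $(\ran A_i)_{i\in I}$ with weights $(\lambda_i)_{i\in I}$ and index $j$, gives $\sum_{i\in I}\lambda_i\ran A_i = X$, and \ref{c:berlini} finishes the job. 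For \ref{c:berlinlong}: from $0\in\bigcap_{i\in I}\overline{\ran A_i}$ we get $0 = \sum_{i\in I}\lambda_i\cdot 0 \in \sum_{i\in I}\lambda_i\overline{\ran A_i} \subseteq \overline{\sum_{i\in I}\lambda_i\ran A_i} = \overline{\ran A}$. For \ref{c:berlinii}: apply Corollary~\ref{c:sprink}\ref{sprink1} to the nearly convex sets $(\ran A_i)_{i\in I}$ and index $j$ — the hypothesis $0\in(\inte\ran A_j)\cap\bigcap_{i\in I\smallsetminus\{j\}}\overline{\ran A_i}$ is exactly what that part requires — to conclude $0\in\inte\sum_{i\in I}\lambda_i\ran A_i = \inte\ran A$, the last equality by \eqref{e:mike} and the near equality.

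The only genuine content is verifying that the hypotheses of Theorem~\ref{c:punch} and Corollary~\ref{c:sprink} line up with what is assumed here: Theorem~\ref{c:punch} needs $\bigcap_{i\in I}\reli\dom A_i\neq\emp$ (assumed), the $A_i$ maximally monotone and rectangular (assumed), and the $\lambda_i$ strictly positive (assumed); Corollary~\ref{c:sprink} needs the summands nearly convex (Fact~\ref{f:Minty61}) and one nonzero weight (all are positive). So the main obstacle — such as it is — is purely bookkeeping: making sure that the $S_i = \ran A_i$ instance of Theorem~\ref{c:punch} is invoked and that Corollary~\ref{c:sprink} is read with $A_i \rightsquigarrow \ran A_i$. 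Everything else is a direct substitution, and I would expect the written proof to be only a few lines per item.
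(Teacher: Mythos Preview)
Your proposal is correct and follows essentially the same route as the paper: invoke Theorem~\ref{c:punch} to obtain $\ran A \approx \sum_{i\in I}\lambda_i\ran A_i$, then read off \ref{c:berlini}--\ref{c:berlinii} from the resulting equality of closures and (relative) interiors, using Corollary~\ref{c:sprink} for \ref{c:berlinii}. The only cosmetic differences are that the paper argues \ref{c:berlini} via $\reli$ rather than $\inte$, dispatches \ref{c:berlini+} directly from \ref{c:berlini} without citing Corollary~\ref{c:sprink}\ref{sprink2}, and in \ref{c:berlinii} passes through $\reli\ran A$ instead of invoking \eqref{e:mike}.
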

\begin{proof}
Theorem~\ref{c:punch} implies that
$\ran\sum_{i\in I}\lambda_iA_i\approx
\sum_{i\in I}\lambda_i\ran A_i$ is nearly convex.
Hence
\begin{equation}\label{c:one}
\reli\ran A = \reli \ran\sum_{i\in I}\lambda_iA_i =
\reli\Big(\sum_{i\in I}\lambda_i\ran A_i\Big) =
\sum_{i\in I}\lambda_i\reli\ran A_i
\end{equation}
and 
\begin{equation}\label{c:two}
\overline{\ran A} = \overline{\ran\sum_{i\in I}\lambda_iA_i }=
\overline{\sum_{i\in I}\lambda_i\ran A_i}\,. 
\end{equation}
\ref{c:berlini}:
Indeed, using \eqref{c:one}, 
$X=\reli X = \reli \sum_{i\in I}\lambda_i\ran A_i
=\reli\ran A \subseteq \ran A\subseteq X$. 
\ref{c:berlini+}:
Clear from \ref{c:berlini}.
\ref{c:berlinlong}: 
It follows from \eqref{c:two} that
$0 \in  \sum_{i\in I}\lambda_i\overline{\ran A_i} \subseteq
\overline{\sum_{i\in I}\lambda_i{\ran A_i}} = \overline{\ran A}$.
\ref{c:berlinii}:
By Fact~\ref{f:Minty61}, $\ran A_i$ is nearly convex for every 
$i\in I$. 
Thus,
$0\in \inte\sum_{i\in I}\lambda_i \ran A_i$ by 
Corollary~\ref{c:sprink}\ref{sprink1}. 
On the other hand, \eqref{c:one} implies that 
$\inte \sum_{i\in I}\lambda_i \ran A_i\subseteq
\reli\sum_{i\in I}\lambda_i \ran A_i = \reli\ran A$.
Altogether, $0\in\inte\ran A$. 
\end{proof}

\section{Firmly Nonexpansive Mappings}

\label{sec:firm}

To find zeros of maximally monotone operators, 
one often utilizes firmly
nonexpansive mappings \cite{BC,Comb94,EckBer,rockprox}. In this section,
we apply the result of Section~\ref{sec:mon} to firmly nonexpansive
mappings. 
Let $T\colon X\to X$.
Recall that $T$ is \emph{firmly nonexpansive}
(see also Zarantonello's seminal work \cite{Zara} for further results)
if 
\begin{equation}
(\forall x\in X)(\forall y\in X)\quad
\scal{x-y}{Tx-Ty}\geq \|Tx-Ty\|^2.
\end{equation}

The following characterizations are well known.

\begin{fact}
\label{f:Goebel}
{\rm (See, e.g., \cite{BC}, \cite{GK}, or \cite{GR}.)}
Let $T\colon X\to X$. Then the following are equivalent.
\begin{enumerate}
\item $T$ is firmly nonexpansive.
\item $(\forall x\in X)(\forall y\in X)$
$\|Tx-Ty\|^2 + \|(\Id-T)x-(\Id-T)y\|^2 \leq \|x-y\|^2$.
\item $(\forall x\in X)(\forall y\in X)$
$0\leq \scal{Tx-Ty}{(\Id-T)x-(\Id-T)y}$.
\item $\Id-T$ is firmly nonexpansive.
\item $2T-\Id$ is \emph{nonexpansive}, i.e.,
Lipschitz continuous with constant $1$.
\end{enumerate}
\end{fact}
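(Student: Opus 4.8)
The plan is to prove the cyclic chain of equivalences (i)$\Rightarrow$(ii)$\Rightarrow$(iii)$\Rightarrow$(iv)$\Rightarrow$(v)$\Rightarrow$(i), since each adjacent implication is a short algebraic identity. Throughout, write $S = 2T-\Id$, so that $T = \thalb(\Id+S)$ and $\Id-T = \thalb(\Id-S)$. The single computational identity underlying everything is the following: for all $x,y\in X$, setting $u = x-y$, $a = Tx-Ty$, $b = (\Id-T)x-(\Id-T)y = u-a$, one has
\begin{equation}
\|u\|^2 - \|Sx-Sy\|^2 = \|a+b\|^2 - \|a-b\|^2 = 4\scal{a}{b},
\end{equation}
and also $\scal{u}{a} - \|a\|^2 = \scal{a+b}{a} - \|a\|^2 = \scal{a}{b}$. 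I would record these two computations once at the start of the proof.

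First, (i)$\Leftrightarrow$(iii): firm nonexpansiveness says $\scal{u}{a}\geq\|a\|^2$, which by the second identity is exactly $\scal{a}{b}\geq 0$, i.e. (iii). Next (iii)$\Leftrightarrow$(ii): by the first identity, $4\scal{a}{b} = \|u\|^2 - \|Sx-Sy\|^2$, and since $\|Sx-Sy\|^2 = \|a+b - 2b + \dots\|$ — more cleanly, $\|a\|^2+\|b\|^2 = \thalb(\|a+b\|^2+\|a-b\|^2)$, so $\|a\|^2+\|b\|^2 \leq \|u\|^2 = \|a+b\|^2$ is equivalent to $\|a-b\|^2 \leq \|a+b\|^2$, i.e. to $\scal{a}{b}\geq 0$. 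This gives (ii)$\Leftrightarrow$(iii) and in particular (i)$\Rightarrow$(ii). For (ii)$\Rightarrow$(iii) we already have it; to keep the cycle I would instead note (iii)$\Rightarrow$(iv): (iii) is symmetric in $a$ and $b$ (it just says $\scal{a}{b}\geq 0$), and the condition for $\Id-T$ to be firmly nonexpansive is, by the same token with the roles of $T$ and $\Id-T$ swapped, again $\scal{b}{a}\geq 0$; hence $T$ firmly nonexpansive $\Leftrightarrow$ $\Id-T$ firmly nonexpansive. Then (iv)$\Rightarrow$(v): $2(\Id-T)-\Id = \Id - 2T = -S$, and $\Id-T$ firmly nonexpansive gives (applying the already-established (i)$\Rightarrow$ nonexpansiveness of $2T-\Id$, with $T$ replaced by $\Id-T$) that $-S$ is nonexpansive, hence so is $S = 2T-\Id$. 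Finally (v)$\Rightarrow$(i): if $\|Sx-Sy\|\leq\|x-y\|$, then $\|Sx-Sy\|^2 \leq \|u\|^2$, so by the first identity $4\scal{a}{b} = \|u\|^2 - \|Sx-Sy\|^2 \geq 0$, whence $\scal{u}{a} = \|a\|^2 + \scal{a}{b} \geq \|a\|^2$, which is firm nonexpansiveness of $T$.

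I do not expect any genuine obstacle here; this is a folklore fact and the whole proof is the parallelogram law applied a few times. The only thing requiring a little care is bookkeeping the sign when passing to $\Id - T$ (its "$2T-\Id$" is $-S$, not $S$), and making sure the cycle of implications is logically closed rather than accidentally circular — the cleanest route is the one above, where (i)$\Leftrightarrow$(iii) and (iii)$\Leftrightarrow$(ii) are proved as genuine equivalences via the two identities, (iii)$\Leftrightarrow$(iv) by symmetry of $\scal{a}{b}\geq0$, (iv)$\Rightarrow$(v) by reusing the $T\mapsto\Id-T$ substitution together with the implication to nonexpansiveness, and (v)$\Rightarrow$(i) directly. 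If one wants to minimize case-work, one can even observe that all five conditions are, after the substitution, literally the single inequality $\scal{Tx-Ty}{(\Id-T)x-(\Id-T)y}\geq 0$ in disguise, and just verify each translation.
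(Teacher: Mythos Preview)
The paper does not actually prove this fact; it is stated as a ``Fact'' with citations to \cite{BC}, \cite{GK}, and \cite{GR}, so there is no proof in the paper to compare against. Your write-up supplies a self-contained argument, which is more than the paper offers.

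Your argument is correct in substance but has one presentational glitch. In the step (iv)$\Rightarrow$(v) you invoke ``the already-established (i)$\Rightarrow$ nonexpansiveness of $2T-\Id$'' --- but that implication is (i)$\Rightarrow$(v), which is precisely what you have not yet shown at that point, so as written the cycle is circular. The fix is immediate and is already implicit in your own identities: the first identity $4\scal{a}{b}=\|u\|^2-\|Sx-Sy\|^2$ shows directly that (iii)$\Leftrightarrow$(v), with no detour through (iv). Combined with your clean proofs of (i)$\Leftrightarrow$(iii), (ii)$\Leftrightarrow$(iii), and (iii)$\Leftrightarrow$(iv), this closes everything. Your closing remark --- that all five conditions are the single inequality $\scal{a}{b}\geq 0$ in disguise --- is exactly the right organizing principle, and I would lead with it rather than attempting a cycle.
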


Minty \cite{Minty62} first observed---while Eckstein and Bertsekas
\cite{EckBer}
made this fully precise---a fundamental
correspondence between maximally monotone operators
and firmly nonexpansive mappings. It is based on the 
\emph{resolvent} of $A$, 
\begin{equation}
J_A := (\Id+A)^{-1},
\end{equation}
which satisfies the useful identity
\begin{equation}
\label{e:nice}
J_{A} + J_{A^{-1}} = \Id,
\end{equation}
and which allows for the beautiful \emph{Minty parametrization} 
\begin{equation}
\label{e:Mintypar}
\gr A = 
\menge{(J_Ax,x-J_Ax)}{x\in X}
\end{equation}
of the graph of $A$.

\begin{fact}
\label{f:Minty}
{\rm (See \cite{EckBer} and \cite{Minty62}.)}
Let $T\colon X\to X$ and let $A\colon X\To X$.
Then the following hold.
\begin{enumerate}
\item
\label{f:Mintyi}
If $T$ is firmly nonexpansive,
then $B := T^{-1}-\Id$ is maximally monotone and $J_B=T$.
\item
If $A$ is maximally monotone, then $J_A$ has full domain, and it is
single-valued and firmly nonexpansive.
\end{enumerate}
\end{fact}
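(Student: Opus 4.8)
The plan is to get both parts from the defining inequalities by direct computation, the only genuinely nontrivial ingredient being Minty's surjectivity theorem, which is needed solely for the ``full domain'' assertion in~(ii). For~(i), I would first identify $\gr B$ from the definitions: since $T$ is single-valued with full domain, $\gr T^{-1}=\menge{(Tx,x)}{x\in X}$, so $\gr B=\menge{(Tx,x-Tx)}{x\in X}$ --- precisely the Minty parametrization \eqref{e:Mintypar} applied to $B$. Monotonicity of $B$ is then immediate: for $x,y\in X$, writing $p=Tx-Ty$ one has $(x-Tx)-(y-Ty)=(x-y)-p$, hence $\scal{Tx-Ty}{(x-Tx)-(y-Ty)}=\scal{p}{x-y}-\|p\|^2\ge 0$ by firm nonexpansiveness of $T$. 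The identity $J_B=T$ is a one-line calculation: $\Id+B=\Id+T^{-1}-\Id=T^{-1}$ as set-valued operators, so $J_B=(\Id+B)^{-1}=(T^{-1})^{-1}=T$. For maximality, let $(a,b)\in X\times X$ be monotonically related to $\gr B$, i.e. $\scal{a-Tx}{b-(x-Tx)}\ge 0$ for every $x\in X$; testing this at the single point $x:=a+b$ gives $b-(x-Tx)=Tx-a$, so $-\|a-T(a+b)\|^2=\scal{a-T(a+b)}{T(a+b)-a}\ge 0$, which forces $T(a+b)=a$ and hence $(a,b)=\big(T(a+b),(a+b)-T(a+b)\big)\in\gr B$. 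Thus $B$ admits no proper monotone extension.

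For~(ii), single-valuedness and firm nonexpansiveness of $J_A$ on its domain follow from monotonicity of $A$ alone. If $u,v\in J_Ax$ then $x-u\in Au$ and $x-v\in Av$, so $0\le\scal{u-v}{(x-u)-(x-v)}=-\|u-v\|^2$, giving $u=v$; and if $u=J_Ax$, $v=J_Ay$, the same inequality reads $\scal{u-v}{(x-y)-(u-v)}\ge 0$, i.e. $\scal{J_Ax-J_Ay}{x-y}\ge\|J_Ax-J_Ay\|^2$, which is firm nonexpansiveness. Finally $\dom J_A=\ran(\Id+A)$; here $\Id+A$ is maximally monotone by Fact~\ref{f:sumrule} (or Example~\ref{ex:resolvents}), and the assertion $\ran(\Id+A)=X$ is Minty's classical surjectivity theorem, which I would invoke directly from \cite{Minty62} --- or, since $X$ is Euclidean, recall its short proof via Brouwer's fixed-point theorem applied to the (continuous, because firmly nonexpansive) resolvent of a suitable ball-truncation of $A$.

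The main obstacle is exactly that last step: everything else is a two-line manipulation of the monotonicity and firm-nonexpansiveness inequalities, but the surjectivity of $\Id+A$ is the substantive fact and has to be imported from the Minty--Rockafellar theory cited just before the statement. Everything else --- the parametrization of $\gr B$, the algebraic identity $J_B=T$, the maximality argument via the test point $a+b$, and the single-valued/firmly nonexpansive properties of $J_A$ --- is routine once one writes down the inclusions $x-u\in Au$ in resolvent form.
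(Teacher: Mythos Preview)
Your proof is correct and complete. However, note that the paper does not actually prove this statement: it is stated as a \emph{Fact} with references to \cite{EckBer} and \cite{Minty62}, and no proof is given in the paper itself. So there is no ``paper's own proof'' to compare against --- the authors simply import the result from the literature.

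What you have written is the standard self-contained argument one finds in those references or in textbooks such as \cite{BC}. The key steps --- the Minty parametrization of $\gr B$, the one-line algebraic identity $J_B=(T^{-1})^{-1}=T$, the maximality test at $x=a+b$, and the derivation of single-valuedness and firm nonexpansiveness of $J_A$ directly from monotonicity --- are all correct and efficiently done. You are also right that the only substantive external input is Minty's surjectivity theorem for $\ran(\Id+A)=X$, and your remark that in the Euclidean setting this can be recovered via Brouwer is accurate. In short: your proposal supplies a valid proof where the paper supplies only a citation.
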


\begin{corollary}
\label{c:pubs}
Let $T\colon X\to X$ be firmly nonexpansive.
Then $T$ is maximally monotone and \reckti,
and $\ran T$ is nearly convex.
\end{corollary}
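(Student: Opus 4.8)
The plan is to deduce everything from the Minty correspondence (Fact~\ref{f:Minty}) together with the results already assembled for maximally monotone rectangular operators. First I would set $A := T^{-1}-\Id$. By Fact~\ref{f:Minty}\ref{f:Mintyi}, $A$ is maximally monotone and $J_A = (\Id+A)^{-1} = T$. Now I would invoke Example~\ref{ex:resolvents}: since $A$ is maximally monotone, $(\Id+A)^{-1}$ is maximally monotone and \reckti; but $(\Id+A)^{-1} = J_A = T$, so $T$ itself is maximally monotone and \reckti. That disposes of the first two assertions with essentially no computation.

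For the remaining claim, that $\ran T$ is nearly convex, I would simply apply Fact~\ref{f:Minty61} (Minty) to the maximally monotone operator $T$: its range is nearly convex. One small point to watch is that Fact~\ref{f:Minty61} is stated for operators $X\To X$, whereas here $T$ is single-valued with full domain; this is harmless, since a single-valued mapping with full domain is in particular a set-valued operator, and $\ran T$ in the single-valued sense coincides with $\ran T$ in the set-valued sense. I should phrase the proof so this identification is transparent.

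The only genuine obstacle — and it is minor — is making sure the logical chain is the efficient one: rather than re-deriving rectangularity of $T$ from the Fitzpatrick function or from Fact~\ref{f:reckti}, I want to route through Example~\ref{ex:resolvents}, which already packages exactly ``resolvent of a maximally monotone operator is maximally monotone and \reckti.'' So the proof is three lines: introduce $A = T^{-1}-\Id$, cite Fact~\ref{f:Minty}\ref{f:Mintyi} to get $A$ maximally monotone with $J_A = T$, cite Example~\ref{ex:resolvents} to get $T = J_A$ maximally monotone and \reckti, and cite Fact~\ref{f:Minty61} for near convexity of $\ran T$. I would write it as follows.

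\begin{proof}
Set $A := T^{-1}-\Id$. By Fact~\ref{f:Minty}\ref{f:Mintyi}, $A$ is maximally monotone and $J_A = T$. Since $J_A = (\Id+A)^{-1}$, Example~\ref{ex:resolvents} yields that $T = J_A$ is maximally monotone and \reckti. Finally, $\ran T$ is nearly convex by Fact~\ref{f:Minty61}.
\end{proof}
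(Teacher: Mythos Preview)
Your proof is correct and follows exactly the same route as the paper: the paper's proof is the one-line ``Combine Example~\ref{ex:resolvents}, Fact~\ref{f:Minty}\ref{f:Mintyi}, and Fact~\ref{f:Minty61},'' and you have simply spelled out how these three ingredients fit together.
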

\begin{proof}
Combine Example~\ref{ex:resolvents},
Fact~\ref{f:Minty}\ref{f:Mintyi}, and Fact~\ref{f:Minty61}.
\end{proof}

It is also known that the class of firmly nonexpansive
mappings is closed under taking convex combinations.
For completeness, we include a short proof of this result.

\begin{lemma}
\label{l:punch}
Let $(T_i)_{i\in I}$ be a family of firmly nonexpansive
mappings on $X$,
and let $(\lambda_i)_{i\in I}$ be a family of strictly positive
real numbers such that $\sum_{i\in I}\lambda_i=1$.
Then $\sum_{i\in I}\lambda_iT_i$ is also firmly nonexpansive.
\end{lemma}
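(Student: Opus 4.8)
The plan is to reduce the claim to the elementary characterization of firm nonexpansiveness in terms of the inner-product inequality, namely that $T$ is firmly nonexpansive if and only if $\scal{x-y}{Tx-Ty}\geq\|Tx-Ty\|^2$ for all $x,y\in X$. Set $T=\sum_{i\in I}\lambda_i T_i$ and fix $x,y\in X$. Abbreviate $u_i=T_ix-T_iy$ and $d=x-y$. Each $T_i$ being firmly nonexpansive gives $\scal{d}{u_i}\geq\|u_i\|^2$ for every $i\in I$, and we must show $\scal{d}{\sum_i\lambda_i u_i}\geq\|\sum_i\lambda_i u_i\|^2$.

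The left-hand side is $\sum_i\lambda_i\scal{d}{u_i}\geq\sum_i\lambda_i\|u_i\|^2$ by linearity and the per-$i$ inequalities. So it suffices to establish the purely Hilbert-space fact
\begin{equation}
\Big\|\sum_{i\in I}\lambda_i u_i\Big\|^2 \leq \sum_{i\in I}\lambda_i\|u_i\|^2
\end{equation}
whenever the $\lambda_i$ are strictly positive and sum to $1$. This is just convexity of the norm squared (Jensen's inequality), or equivalently can be checked directly by expanding both sides and invoking $2\scal{u_i}{u_k}\leq\|u_i\|^2+\|u_k\|^2$ together with $\sum_i\lambda_i=1$. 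Chaining the two displayed inequalities yields $\scal{x-y}{Tx-Ty}\geq\|Tx-Ty\|^2$, which is exactly firm nonexpansiveness of $T$ by Fact~\ref{f:Goebel}.

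There is no serious obstacle here; the only thing to be careful about is that the convexity step genuinely uses $\sum_{i\in I}\lambda_i=1$ (not merely positivity), since otherwise the inequality can fail. An alternative, slightly slicker route would be to use the characterization $T$ firmly nonexpansive $\Leftrightarrow$ $2T-\Id$ nonexpansive: since $2T-\Id=\sum_{i\in I}\lambda_i(2T_i-\Id)$ is a convex combination of nonexpansive maps and the nonexpansive maps form a convex set, $2T-\Id$ is nonexpansive; but this merely relocates the same convexity-of-the-norm argument, so I would present the direct inner-product computation above.
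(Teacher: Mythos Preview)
Your argument is correct. You work directly with the defining inequality $\scal{x-y}{Tx-Ty}\geq\|Tx-Ty\|^2$ and close the gap with Jensen's inequality for $\|\cdot\|^2$; the convexity step is sound and you rightly flag that it requires $\sum_{i\in I}\lambda_i=1$.

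The paper, however, takes precisely the ``alternative, slightly slicker route'' you mention at the end: it invokes Fact~\ref{f:Goebel} to pass to the equivalent statement that each $2T_i-\Id$ is nonexpansive, observes that $2T-\Id=\sum_{i\in I}\lambda_i(2T_i-\Id)$ is a convex combination of nonexpansive maps, and concludes that $2T-\Id$ is nonexpansive (hence $T$ firmly nonexpansive) in one line. So your dismissal of that route as ``merely relocating the same convexity-of-the-norm argument'' is a bit unfair: the nonexpansive version only needs convexity of $\|\cdot\|$ (triangle inequality), not of $\|\cdot\|^2$, and avoids the two-step chaining entirely. Your direct approach has the virtue of being self-contained from the definition, while the paper's is shorter once Fact~\ref{f:Goebel} is on the table.
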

\begin{proof}
Set $T = \sum_{i\in I}\lambda_iT_i$.
By Fact~\ref{f:Goebel}, $2T_{i}-\Id$ is nonexpansive for every
$i\in I$,
so $2T-\Id=\sum_{i\in I}\lambda_{i}(2T_{i}-\Id)$ is also nonexpansive.
Applying Fact~\ref{f:Goebel} once more, we deduce that $T$ is firmly
nonexpansive.
\end{proof}

We are now ready for the first main result of this section.

\begin{theorem}[averages of firmly nonexpansive mappings]
\label{t:punch}
Let $(T_i)_{i\in I}$ be a family of firmly nonexpansive
mappings on $X$,
let $(\lambda_i)_{i\in I}$ be a family of strictly positive
real numbers such that $\sum_{i\in I}\lambda_i=1$, and let $j\in I$. 
Set $T = \sum_{i\in I} \lambda_iT_i$.
Then the following hold.
\begin{enumerate}
\item
\label{t:punchi}
$T$ is firmly nonexpansive
and $\ran T \approx \sum_{i\in I}\lambda_i\ran T_i$
is nearly convex.
\item
\label{t:punchii}
If $T_j$ is surjective,
then $T$ is surjective.
\item
\label{t:punchlong}
If $0\in \bigcap_{i\in I} \overline{\ran T_{i}}$, then
$0\in\overline{\ran T}$.
\item
\label{t:punchiii}
If
$0\in(\inte\ran T_j)\cap \bigcap_{i\in I\smallsetminus\{j\}}
\overline{\ran T_i}$,
then
$0\in\inte\ran T$.
\end{enumerate}
\end{theorem}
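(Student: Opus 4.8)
The plan is to reduce everything to the monotone-operator results of Section~\ref{sec:mon}, specifically Theorem~\ref{c:berlin}, via the Minty correspondence. First I would record, using Fact~\ref{f:Minty}\ref{f:Mintyi}, that for each $i\in I$ the operator $B_i := T_i^{-1} - \Id$ is maximally monotone with $J_{B_i} = T_i$; by Corollary~\ref{c:pubs} (or directly via Example~\ref{ex:resolvents} together with $T_i = J_{B_i} = (\Id+B_i)^{-1}$) each such resolvent is firmly nonexpansive, maximally monotone, and \reckti, with $\ran T_i$ nearly convex. Part~\ref{t:punchi} then splits into two claims: $T$ is firmly nonexpansive, which is exactly Lemma~\ref{l:punch}; and $\ran T \approx \sum_{i\in I}\lambda_i\ran T_i$ is nearly convex. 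For the latter I would like to apply Theorem~\ref{c:punch} to the family $(T_i)_{i\in I}$ of firmly nonexpansive (hence maximally monotone, \reckti) operators with the weights $(\lambda_i)_{i\in I}$; the one hypothesis to check is $\bigcap_{i\in I}\reli\dom T_i \neq \emp$, and here the point is that every firmly nonexpansive mapping has full domain $X$ (Fact~\ref{f:Minty}), so $\reli\dom T_i = \reli X = X$ for all $i$ and the intersection is $X \neq \emp$. Thus Theorem~\ref{c:punch} gives that $\sum_{i\in I}\lambda_i T_i = T$ is maximally monotone, \reckti, and $\ran T \approx \sum_{i\in I}\lambda_i\ran T_i$ is nearly convex, which is \ref{t:punchi}.

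Parts~\ref{t:punchii}--\ref{t:punchiii} are then immediate transcriptions of Theorem~\ref{c:berlin}\ref{c:berlini+}, \ref{c:berlinlong}, \ref{c:berlinii} respectively, applied to $A = \sum_{i\in I}\lambda_i T_i = T$ with $A_i$ replaced by $T_i$: the hypothesis $\bigcap_{i\in I}\reli\dom T_i\neq\emp$ holds as above (it is $X$), and the conclusions of Theorem~\ref{c:berlin} read verbatim as the asserted statements once one notes $\sum_{i\in I}\lambda_i = 1$ plays no role in those deductions beyond what is already subsumed. Concretely: \ref{t:punchii} follows from \ref{c:berlini+}; \ref{t:punchlong} follows from \ref{c:berlinlong}; and \ref{t:punchiii} follows from \ref{c:berlinii}.

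I do not anticipate a genuine obstacle here, since all the heavy lifting was done in Sections~\ref{sec:near} and~\ref{sec:mon}; the only thing that requires a moment's care is the verification of the relative-interior-of-domain condition $\bigcap_{i\in I}\reli\dom T_i\neq\emp$, and this is trivial precisely because firmly nonexpansive mappings on a Euclidean space have full domain. It is worth flagging in the write-up that this is the feature of the firmly nonexpansive setting that makes Theorem~\ref{c:punch} and Theorem~\ref{c:berlin} apply without any further qualification condition—in contrast to the general monotone case, where $\bigcap_{i\in I}\reli\dom A_i\neq\emp$ is a real restriction.
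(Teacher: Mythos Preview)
Your proposal is correct and follows essentially the same route as the paper: invoke Corollary~\ref{c:pubs} to see that each $T_i$ is maximally monotone and \reckti\ with full domain, then appeal to Lemma~\ref{l:punch} and Theorem~\ref{c:punch} for \ref{t:punchi}, and to Theorem~\ref{c:berlin}\ref{c:berlini+}, \ref{c:berlinlong}, \ref{c:berlinii} for \ref{t:punchii}--\ref{t:punchiii}. Your explicit verification that $\bigcap_{i\in I}\reli\dom T_i = X \neq\emp$ is exactly the point the paper leaves implicit, and your observation that this is what makes the firmly nonexpansive setting unconditional is a nice remark.
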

\begin{proof} By Corollary~\ref{c:pubs}, each $T_{i}$ is 
maximally monotone, rectangular and
$\ran T_{i}$ is nearly convex.
\ref{t:punchi}: Lemma~\ref{l:nearconset},
Lemma~\ref{l:punch}, and Theorem~\ref{c:punch}.
\ref{t:punchii}: Theorem~\ref{c:berlin}\ref{c:berlini+}. \ref{t:punchlong}:
Theorem~\ref{c:berlin}\ref{c:berlinlong}.
\ref{t:punchiii}: Theorem~\ref{c:berlin}\ref{c:berlinii}.
\end{proof}

The following averaged-projection operator plays
a role in methods for solving (potentially inconsistent) convex
feasibility problems because its fixed point set consists
of least-squares solutions; see, e.g.,
\cite[Section~6]{BBJAT}, \cite{BWW} and \cite{Comb94}
for further information.

\begin{example}
\label{ex:average}
Let $(C_i)_{i\in I}$ be a family of nonempty closed convex subsets of
$X$ with associated projection operators $P_i$,
and let $(\lambda_i)_{i\in I}$ be a family of strictly positive
real numbers such that $\sum_{i\in I}\lambda_i =1 $.
Then
\begin{equation}
\ran\sum_{i\in I}\lambda_iP_i \approx
\sum_{i\in I}\lambda_i C_i.
\end{equation}
\end{example}
\begin{proof}
This follows from Theorem~\ref{t:punch}\ref{t:punchi}
since $(\forall i\in I)$
$\ran P_i = C_i$.
\end{proof}

\begin{remark}
Let $C_1$ and $C_2$ be nonempty closed convex subsets of $X$
with associated projection operators $P_1$ and $P_2$ respectively,
and---instead of averaging as in Example~\ref{ex:average}---consider the
composition $T=P_2\circ P_1$, which is still \emph{nonexpansive}.
It is obvious that $\ran T\subseteq \ran P_2 = C_2$,
but $\ran T$ need not be even nearly convex:
indeed, suppose that $X=\RR^2$, let $C_2$ be the unit ball
centered at $0$ of radius $1$, and let $C_1 = \RR\times\{2\}$.
Then $\ran T$ is the intersection of the open upper halfplane
and the boundary of $C_2$, which is very far from being nearly convex.
Thus the near convexity part of Corollary~\ref{c:pubs} has no
counterpart for nonexpansive mappings.
\end{remark}

\begin{definition}
\label{d:fix}
Let $T\colon X\to X$ be firmly nonexpansive.
The set of \emph{fixed points} is denoted by
\begin{equation}
\Fix T = \menge{x\in X}{x=Tx}.
\end{equation}
We say that $T$ is \emph{asymptotically regular} if there
exists a sequence $(x_n)_\nnn$ in $X$ such that
$x_n-Tx_n\to 0$; equivalently, if
$0\in\overline{\ran(\Id-T)}$. 
\end{definition}

\begin{remark}
\label{r:fix}
If the sequence $(x_n)_\nnn$ in Definition~\ref{d:fix}
has a cluster point, say $\bar{x}$, then continuity of $T$
implies that $\bar{x}\in\Fix T$.
\end{remark}

The next result is a consequence of
fundamental work \cite{BBR} by Baillon, Bruck and Reich.

\begin{theorem}
Let $T\colon X\to X$ be firmly nonexpansive.
Then $T$ is asymptotically regular if and only if
for every $x_0\in X$, the sequence defined by
\begin{equation}
(\forall\nnn)\quad x_{n+1} = Tx_n
\end{equation}
satisfies $x_n-x_{n+1}\to 0$.
Moreover, if $\Fix T\neq\emp$, then $(x_n)_\nnn$ converges
to a fixed point; otherwise, $\|x_n\|\to\pinf$.
\end{theorem}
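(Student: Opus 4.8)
The plan is to reduce the entire statement to one quantitative fact of Baillon, Bruck and Reich \cite{BBR}: if $T$ is firmly nonexpansive and $x_{n+1}=Tx_n$, then for every $x_0\in X$ the sequence $(x_n-x_{n+1})_\nnn$ converges to the point of minimal norm of $\cconv\ran(\Id-T)$. To prepare the ground I would first note that $\Id-T$ is firmly nonexpansive (Fact~\ref{f:Goebel}), so $\ran(\Id-T)$ is nearly convex by Corollary~\ref{c:pubs}; hence, by Lemma~\ref{l:nearconset}, $C:=\overline{\ran(\Id-T)}=\cconv\ran(\Id-T)$ is a nonempty closed convex set, and I let $v$ be its point of minimal norm. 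Since $C$ is closed, $v=0$ if and only if $0\in C=\overline{\ran(\Id-T)}$, that is, by Definition~\ref{d:fix}, if and only if $T$ is asymptotically regular.

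The equivalence then drops out immediately. If $T$ is asymptotically regular, then $v=0$, so the quoted fact gives $x_n-x_{n+1}\to v=0$ for every $x_0\in X$. Conversely, if for every $x_0\in X$ the iterates $x_{n+1}=Tx_n$ satisfy $x_n-x_{n+1}\to0$, then fixing one $x_0$ produces a sequence with $x_n-Tx_n\to0$, so $T$ is asymptotically regular by Definition~\ref{d:fix}.

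For the dichotomy, suppose first $\Fix T\neq\emp$ and fix $y\in\Fix T$. Then $0=y-Ty\in\ran(\Id-T)\subseteq C$, so $v=0$ and $x_n-x_{n+1}\to0$; moreover $\|x_{n+1}-y\|=\|Tx_n-Ty\|\leq\|x_n-y\|$ since firmly nonexpansive mappings are nonexpansive, so $(\|x_n-y\|)_\nnn$ is nonincreasing and $(x_n)_\nnn$ is bounded. Because $X$ is finite-dimensional, some subsequence satisfies $x_{n_k}\to\bar x$; by continuity of $T$, $x_{n_k+1}=Tx_{n_k}\to T\bar x$, while $x_{n_k+1}=x_{n_k}-(x_{n_k}-x_{n_k+1})\to\bar x$, so $\bar x\in\Fix T$ (this is exactly the reasoning of Remark~\ref{r:fix}). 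Finally, $(\|x_n-\bar x\|)_\nnn$ is nonincreasing and has a subsequence tending to $0$, hence $x_n\to\bar x\in\Fix T$.

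It remains to handle $\Fix T=\emp$, and here I would split on $v$. If $v\neq0$, then telescoping together with the Ces\`aro mean theorem gives $\tfrac{1}{n}(x_0-x_n)=\tfrac{1}{n}\sum_{j=0}^{n-1}(x_j-x_{j+1})\to v$, whence $\|x_n\|/n\to\|v\|>0$ and so $\|x_n\|\to\pinf$. If $v=0$, then $x_n-x_{n+1}\to0$, and if $(x_n)_\nnn$ had a bounded subsequence, finite-dimensionality would yield a further subsequence with $x_{n_k}\to\bar x$; exactly as in the previous paragraph, $x_{n_k+1}=Tx_{n_k}\to T\bar x$ and $x_{n_k+1}\to\bar x$ would force $\bar x\in\Fix T$, contradicting $\Fix T=\emp$; hence again $\|x_n\|\to\pinf$. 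I expect the only real obstacle to be pinning down the result of Baillon, Bruck and Reich in precisely the form quoted at the outset and checking that it applies verbatim to a firmly nonexpansive self-map of a finite-dimensional Euclidean space; granting that, everything else is routine bookkeeping with near convexity, nonexpansiveness, continuity and compactness.
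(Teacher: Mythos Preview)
Your proof is correct. The paper's own proof is simply the one-line citation ``This follows from \cite[Corollary~2.3, Theorem~1.2, and Corollary~2.2]{BBR}.'' You draw on the same source but extract only one ingredient---the convergence of the increments $x_n-x_{n+1}$ to the minimal-norm element of $\cconv\ran(\Id-T)$---and then supply the rest yourself: the identification $\overline{\ran(\Id-T)}=\cconv\ran(\Id-T)$ via the paper's own near-convexity machinery (Corollary~\ref{c:pubs} and Lemma~\ref{l:nearconset}), Fej\'er monotonicity plus finite-dimensional compactness for the case $\Fix T\neq\emp$, and a Ces\`aro/cluster-point dichotomy for the case $\Fix T=\emp$. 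This makes your argument more self-contained within the paper's framework and shows concretely how the three BBR citations assemble; the only point requiring external verification is, as you correctly flag, that the BBR increment result applies verbatim to firmly nonexpansive self-maps of $X$, which it does.
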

\begin{proof}
This follows from
\cite[Corollary~2.3, Theorem~1.2, and Corollary~2.2]{BBR}.
\end{proof}

Here is the second main result of this section.

\begin{theorem}[asymptotic regularity of the average]
\label{t:kool}
Let $(T_i)_{i\in I}$ be a family of firmly nonexpansive
mappings on $X$,
and let $(\lambda_i)_{i\in I}$ be a family of strictly positive
real numbers such that $\sum_{i\in I}\lambda_i=1$.
Suppose that $T_i$ is asymptotically regular, for every $i\in I$.
Then $\sum_{i\in I} \lambda_iT_i$ is also asymptotically regular.
\end{theorem}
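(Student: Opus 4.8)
The plan is to reduce the statement to Theorem~\ref{t:punch}\ref{t:punchlong}, applied not to the mappings $T_i$ themselves but to the complementary mappings $S_i := \Id - T_i$. First I would record that, by Fact~\ref{f:Goebel}, each $S_i = \Id - T_i$ is again firmly nonexpansive, so the family $(S_i)_{i\in I}$ is eligible for Theorem~\ref{t:punch}. Next, since $\sum_{i\in I}\lambda_i = 1$, the average of the $S_i$ is
\[
\sum_{i\in I}\lambda_i S_i = \sum_{i\in I}\lambda_i\Id - \sum_{i\in I}\lambda_i T_i = \Id - T,
\]
so that, by Definition~\ref{d:fix}, asymptotic regularity of $T$ is precisely the assertion that $0\in\overline{\ran\big(\sum_{i\in I}\lambda_i S_i\big)}$.

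Then I would translate the hypothesis: for each $i\in I$, asymptotic regularity of $T_i$ means (again by Definition~\ref{d:fix}) that $0\in\overline{\ran(\Id - T_i)} = \overline{\ran S_i}$; hence $0\in\bigcap_{i\in I}\overline{\ran S_i}$. Now Theorem~\ref{t:punch}\ref{t:punchlong}, applied to the firmly nonexpansive family $(S_i)_{i\in I}$ with the strictly positive weights $(\lambda_i)_{i\in I}$ summing to $1$, yields $0\in\overline{\ran\big(\sum_{i\in I}\lambda_i S_i\big)} = \overline{\ran(\Id - T)}$, i.e., $T$ is asymptotically regular. (For good measure one can note that $T$ is itself firmly nonexpansive by Lemma~\ref{l:punch}, so the notion of asymptotic regularity genuinely applies to it, although this is not needed for the range computation.)

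The only real content is the observation that the nontrivial near-equality machinery is invoked for the ``inner'' mappings $\Id - T_i$, after which everything is bookkeeping built on $\sum_{i\in I}\lambda_i = 1$. I do not anticipate a serious obstacle; the one point to check carefully is that the hypotheses of Theorem~\ref{t:punch} (weights strictly positive, summing to $1$) are exactly those assumed here, and that $\ran(\Id - T) = \ran\big(\sum_{i\in I}\lambda_i(\Id - T_i)\big)$ is indeed the object occurring in Theorem~\ref{t:punch}\ref{t:punchlong}.
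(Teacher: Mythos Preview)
Your proposal is correct and follows essentially the same route as the paper's proof: both pass to the complementary mappings $S_i=\Id-T_i$, note they are firmly nonexpansive (Fact~\ref{f:Goebel}), use $\sum_{i\in I}\lambda_i S_i=\Id-T$, translate asymptotic regularity via Definition~\ref{d:fix} into $0\in\bigcap_{i\in I}\overline{\ran S_i}$, and then invoke Theorem~\ref{t:punch}\ref{t:punchlong}. Your write-up is slightly more detailed (explicitly citing Fact~\ref{f:Goebel} and Lemma~\ref{l:punch}), but the argument is the same.
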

\begin{proof}
Set $T = \sum_{i\in I} \lambda_iT_i$. Then
$$\Id-T=\sum_{i\in I}\lambda_{i}(\Id- T_{i}).$$
Since each $\Id-T_{i}$ is firmly nonexpansive and $0\in\overline{\ran (\Id-T_{i})}$ by the
asymptotic regularity of $T_{i}$,
the conclusion follows from 
Theorem~\ref{t:punch}\ref{t:punchlong}.
\end{proof}

\begin{remark}
\label{r:kool}
Consider Theorem~\ref{t:kool}.
Even when $\Fix T_i\neq\emp$, for every $i\in I$, it is impossible
to improve the conclusion to $\Fix\sum_{i\in I}\lambda_iT_i\neq\emp$.
Indeed, suppose that $X=\RR^2$,
and set $C_1 = \RR\times\{0\}$ and $C_2=\epi\exp$.
Set $T = \thalb P_{C_1} + \thalb P_{C_2}$.
Then $\Fix T_1= C_1$ and $\Fix T_2 = C_2$, yet
$\Fix T = \emp$.
\end{remark}

The proof of the following useful result is straightforward and hence
omitted.

\begin{lemma}
\label{l:last}
Let $A\colon X\To X$ be maximally monotone.
Then $J_A$ is asymptotically regular if and only if $0\in
\overline{\ran A}$. 
\end{lemma}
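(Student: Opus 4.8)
The plan is to translate the asymptotic regularity of $J_A$, which is a statement about $\operatorname{Id}-J_A$, into a statement about $\ran A$ via the Minty parametrization \eqref{e:Mintypar}. Recall from \eqref{e:nice} that $\operatorname{Id}-J_A = J_{A^{-1}}$, so $\ran(\operatorname{Id}-J_A) = \ran J_{A^{-1}}$. By Definition~\ref{d:fix}, $J_A$ is asymptotically regular if and only if $0\in\overline{\ran(\operatorname{Id}-J_A)} = \overline{\ran J_{A^{-1}}}$, and thus the whole claim reduces to the identity $\overline{\ran J_{A^{-1}}} = \overline{\ran A}$, or even just to $0\in\overline{\ran J_{A^{-1}}}\Leftrightarrow 0\in\overline{\ran A}$.

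First I would establish the cleaner set-level fact that $\ran J_{A^{-1}} \approx \ran A$; the desired equivalence of closures is then immediate from the definition of $\approx$. To see this, note that $J_{A^{-1}} = (\operatorname{Id}+A^{-1})^{-1}$, and by the Minty parametrization applied to $A^{-1}$ (or directly: $y\in J_{A^{-1}}x \Leftrightarrow x\in y + A^{-1}y \Leftrightarrow x-y\in A^{-1}y \Leftrightarrow y\in A(x-y)$), one reads off that $\ran J_{A^{-1}}\subseteq \dom A^{-1} = \ran A$. For the reverse direction up to near equality, observe that if $u\in\ran A$, say $u\in Av$, then taking $x = v+u$ gives $u = J_{A^{-1}}x$, so in fact $\ran J_{A^{-1}} = \ran A$ exactly — the parametrization is surjective onto $\ran A$. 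Hence $\overline{\ran(\operatorname{Id}-J_A)} = \overline{\ran A}$ on the nose, and the equivalence follows.

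Alternatively, and perhaps more in the spirit of the preceding section, one can invoke Fact~\ref{f:Minty}: since $A$ is maximally monotone, $J_A$ is firmly nonexpansive, hence so is $\operatorname{Id}-J_A$ by Fact~\ref{f:Goebel}; and the maximally monotone operator $B$ with $J_B = \operatorname{Id}-J_A$ is exactly $B = (\operatorname{Id}-J_A)^{-1}-\operatorname{Id} = A^{-1}$, using \eqref{e:nice} again. Either route makes the argument genuinely straightforward, which is why the paper omits it. The only point requiring a moment's care — and the closest thing to an obstacle — is being sure that the Minty parametrization \eqref{e:Mintypar} for $A^{-1}$ actually hands one an exact equality of ranges rather than merely an inclusion; but since $\gr A^{-1} = \{(J_{A^{-1}}x, x-J_{A^{-1}}x) : x\in X\}$ lists every point of $\gr A^{-1}$, its first-coordinate projection is all of $\dom A^{-1} = \ran A$, so no near-equality slack is even needed here.
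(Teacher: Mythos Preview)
Your argument is correct. The paper explicitly omits the proof, calling it ``straightforward,'' and what you have written is precisely the expected straightforward argument: use \eqref{e:nice} to rewrite $\Id-J_A=J_{A^{-1}}$, then read off $\ran J_{A^{-1}}=\dom A^{-1}=\ran A$ from the Minty parametrization \eqref{e:Mintypar}, so that asymptotic regularity of $J_A$ becomes exactly $0\in\overline{\ran A}$. Your observation that the range identity is exact (not merely a near equality) is correct and worth recording; the alternative route via Fact~\ref{f:Minty} and Fact~\ref{f:Goebel} is also valid but unnecessary once the exact equality is in hand.
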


We conclude this paper 
with an application to the resolvent average of monotone operators.
Let $(A_i)_{i\in I}$ be a family of maximally monotone operators on
$X$. 
Compute and average the corresponding resolvents
to obtain $T := \sum_{i\in I}\lambda_i J_{A_i}$.
By Lemma~\ref{l:punch}, $T$ is firmly nonexpansive;
hence, again by Fact~\ref{f:Minty}, $T=J_A$ for some
maximally monotone operator $A$.
The operator $A$ is called the
\emph{resolvent average} of the family $(A_i)_{i\in I}$
with respect to the weights $(\lambda_i)_{i\in I}$;
it was analyzed in detail for
real symmetric positive semidefinite matrices
in \cite{BMWLAA}.

\begin{corollary}[resolvent average] \label{c:resaverage}
Let $(A_i)_{i\in I}$ be a family of maximally monotone operators on
$X$, let $j\in I$, and set 
\begin{equation}
A=\bigg(\sum_{i\in I}\lambda_{i}(\Id+A_{i})^{-1}\bigg)^{-1}-\Id.
\end{equation}
Then the following hold. 
\begin{enumerate}
\item\label{c:wi} $A$ is maximally monotone.
\item\label{c:wii} $\dom A\approx \sum_{i\in I}\lambda_{i}\dom A_{i}$.
\item\label{c:wiii} $\ran A\approx \sum_{i\in I}\lambda_{i}\ran A_{i}$.
\item\label{c:wiv} If $0\in\bigcap_{i\in I}\overline{\ran A_{i}}$, 
then $0\in\overline{\ran A}$.
\item\label{c:wv} 
If $0\in\inte\ran A_{j}\cap 
\bigcap_{i\in I\setminus\{j\}}\overline{\ran A_{i}}$,
then $0\in\inte\ran A$.
\item\label{c:wvi} If $\dom A_{j}=X$, then $\dom A=X$.
\item \label{c:wvii} If $\ran A_{j}=X$, then $\ran A=X$.
\end{enumerate}
\end{corollary}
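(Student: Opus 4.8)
The plan is to reduce all seven items to Theorem~\ref{t:punch}, applied to two auxiliary families of firmly nonexpansive mappings: the ``primal'' family $(J_{A_i})_{i\in I}$, which controls the domain of $A$, and the ``dual'' family $(J_{A_i^{-1}})_{i\in I}$, which controls its range. First I would set $T=\sum_{i\in I}\lambda_i J_{A_i}$. By Fact~\ref{f:Minty} each $J_{A_i}$ is firmly nonexpansive, so Lemma~\ref{l:punch} shows $T$ is firmly nonexpansive; then Fact~\ref{f:Minty}\ref{f:Mintyi} shows that $A=T^{-1}-\Id$ is maximally monotone with $J_A=T$. This is \ref{c:wi}.

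Next I would record the bookkeeping identities that convert statements about $A$ into statements about $T$. Since $A=T^{-1}-\Id$, we have $\dom A=\dom T^{-1}=\ran T$, and, reading off the Minty parametrization \eqref{e:Mintypar} of $\gr A$ (equivalently $\gr A=\menge{(Tx,x-Tx)}{x\in X}$), $\ran A=\ran(\Id-T)$. Using $\sum_{i\in I}\lambda_i=1$ and \eqref{e:nice}, $\Id-T=\sum_{i\in I}\lambda_i(\Id-J_{A_i})=\sum_{i\in I}\lambda_i J_{A_i^{-1}}$; since every $A_i^{-1}$ is maximally monotone, $(J_{A_i^{-1}})_{i\in I}$ is once more a family of firmly nonexpansive mappings. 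Finally $\ran J_{A_i}=\dom(\Id+A_i)=\dom A_i$ and $\ran J_{A_i^{-1}}=\dom(\Id+A_i^{-1})=\ran A_i$.

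The remaining items now follow by feeding the appropriate family into Theorem~\ref{t:punch}. Applying Theorem~\ref{t:punch}\ref{t:punchi} to $(J_{A_i})_{i\in I}$ gives $\dom A=\ran T\approx\sum_{i\in I}\lambda_i\ran J_{A_i}=\sum_{i\in I}\lambda_i\dom A_i$, which is \ref{c:wii}; the same result applied to $(J_{A_i^{-1}})_{i\in I}$ gives $\ran A=\ran(\Id-T)\approx\sum_{i\in I}\lambda_i\ran J_{A_i^{-1}}=\sum_{i\in I}\lambda_i\ran A_i$, which is \ref{c:wiii}. For \ref{c:wiv} and \ref{c:wv}, the hypotheses $0\in\bigcap_{i\in I}\overline{\ran A_i}$ and $0\in\inte\ran A_j\cap\bigcap_{i\in I\setminus\{j\}}\overline{\ran A_i}$ are precisely the hypotheses of Theorem~\ref{t:punch}\ref{t:punchlong} and Theorem~\ref{t:punch}\ref{t:punchiii} for the family $(J_{A_i^{-1}})_{i\in I}$, giving $0\in\overline{\ran(\Id-T)}=\overline{\ran A}$ and $0\in\inte\ran(\Id-T)=\inte\ran A$. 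For \ref{c:wvi}, $\dom A_j=X$ says $\ran J_{A_j}=X$, so $J_{A_j}$ is surjective and Theorem~\ref{t:punch}\ref{t:punchii} yields $\ran T=X$, i.e.\ $\dom A=X$; and \ref{c:wvii} is the mirror statement, since $\ran A_j=X$ makes $J_{A_j^{-1}}$ surjective, whence $\ran(\Id-T)=X$, i.e.\ $\ran A=X$. I do not foresee a real obstacle: the only genuine content is the pair of identities $\ran A=\ran(\Id-T)$ and $\Id-T=\sum_{i\in I}\lambda_i J_{A_i^{-1}}$, both immediate from \eqref{e:Mintypar} and \eqref{e:nice}; the rest is simply the discipline of using the primal family for the domain claims and the dual family for the range claims.
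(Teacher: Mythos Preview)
Your proof is correct and follows essentially the same strategy as the paper: set up the primal family $(J_{A_i})_{i\in I}$ and the dual family $(J_{A_i^{-1}})_{i\in I}=(\Id-J_{A_i})_{i\in I}$, identify $\dom A=\ran J_A$ and $\ran A=\ran J_{A^{-1}}$, and feed each family into Theorem~\ref{t:punch}. The only cosmetic differences are in routing: for \ref{c:wiv} the paper detours through asymptotic regularity (Theorem~\ref{t:kool} and Lemma~\ref{l:last}) whereas you invoke Theorem~\ref{t:punch}\ref{t:punchlong} directly, and for \ref{c:wvi}--\ref{c:wvii} the paper reads the conclusions off \ref{c:wii}--\ref{c:wiii} while you use the surjectivity item Theorem~\ref{t:punch}\ref{t:punchii}; your choices are slightly more direct but not substantively different.
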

\begin{proof}
Observe that
\begin{equation}
\label{con:resolvent}
J_{A}=\sum_{i\in I}\lambda_{i} J_{A_{i}}
\end{equation}
and 
\begin{equation}\label{inver:resolvent}
\quad J_{A^{-1}}=\sum_{i\in I}\lambda_{i} J_{A_{i}^{-1}}
\end{equation}
by using \eqref{e:nice}.
Furthermore, using \eqref{e:Mintypar}, we note that 
\begin{equation}
\ran J_{A}=\dom A \quad\text{and}\quad 
\ran J_{A^{-1}}=\ran A.
\end{equation}
\ref{c:wi}: This follows from \eqref{con:resolvent} and Fact~\ref{f:Minty}.
\ref{c:wii}: Apply Theorem~\ref{t:punch}\ref{t:punchi} to
$(J_{A_i})_{i\in I}$.
\ref{c:wiii}: Apply Theorem~\ref{t:punch}\ref{t:punchi} to 
$(\Id-J_{A_i})_{i\in I}$.
\ref{c:wiv}:  Combine Theorem~\ref{t:kool} and Lemma~\ref{l:last}. 
\ref{c:wv}:  Apply Theorem~\ref{t:punch}\ref{t:punchiii} 
to \eqref{inver:resolvent}.
\ref{c:wvi} and \ref{c:wvii}: These follow 
from  \ref{c:wii} and \ref{c:wiii}, respectively.
\end{proof}

\begin{remark}[proximal average]
In Corollary~\ref{c:resaverage},
one may also start from a family $(f_i)_{i\in I}$
of functions on $X$ that are convex, lower semicontinuous,
and proper, and with corresponding subdifferential operators
$(A_i)_{i\in I} = (\partial f_i)_{i\in I}$.
This relates to the \emph{proximal average}, 
$p$, of the family $(f_i)_{i\in I}$, where 
$\partial p$ is the resolvent average of
the family $(\partial f_i)_{i\in I}$.
See \cite{BGLW} for further information and references.
Corollary~\ref{c:resaverage}\ref{c:wvii} essentially states
that $p$ is \emph{supercoercive}
provided that some $f_j$ is.
Analogously, Corollary~\ref{c:resaverage}\ref{c:wv} shows that 
that \emph{coercivity} of $p$ follows from the coercivity of
some function $f_j$. 
Similar comments apply to \emph{sharp minima}; see
\cite[Lemma~3.1 and Theorem~4.3]{GHW} for details.
\end{remark}

\section*{Acknowledgments}
Heinz Bauschke was partially supported by the Natural Sciences and
Engineering Research Council of Canada and by the Canada Research Chair
Program.
Sarah Moffat was partially
supported by the Natural Sciences and Engineering Research Council
of Canada.
Xianfu Wang was partially
supported by the Natural Sciences and Engineering Research Council
of Canada.


\end{document}